\documentclass[11pt,reqno]{amsart}

\usepackage[T1]{fontenc}
\usepackage[utf8]{inputenc}
\usepackage{cmap}
\usepackage{lmodern}
\input{glyphtounicode}
\usepackage{amsmath,amssymb,amsthm}
\usepackage{mathtools}
\usepackage{booktabs}
\usepackage{tikz}
\usepackage[margin=1.15in]{geometry}
\usepackage[colorlinks=true,linkcolor=blue,citecolor=blue,urlcolor=blue]{hyperref}
\hypersetup{pdftitle={The density of long runs of non-r-free integers}}

\theoremstyle{plain}
\newtheorem{theorem}{Theorem}[section]
\newtheorem{proposition}[theorem]{Proposition}
\newtheorem{lemma}[theorem]{Lemma}
\newtheorem{corollary}[theorem]{Corollary}
\theoremstyle{definition}

\newtheorem{question}[theorem]{Question}
\newtheorem{remark}[theorem]{Remark}
\newtheorem*{definition*}{Definition}

\numberwithin{equation}{section}

\newcommand{\N}{\mathbb{N}}
\newcommand{\Z}{\mathbb{Z}}
\DeclareMathOperator{\dens}{dens}

\begin{document}

\title[The density of long runs of non-$r$-free integers]
{The density of long runs of non-$r$-free integers}

\author{Xinyang Jiang}
\email{jiangxinyang222@gmail.com}

\subjclass[2020]{Primary 11N25; Secondary 11N36, 11B05}
\keywords{$r$-free numbers, squarefree numbers, gaps, runs, Mirsky constants,
moments of gaps}

\begin{abstract}
Fix $r\ge2$ and let $\Delta^{(r)}_k$ be the natural density of the integers $n$ for which
none of $n+1,\dots,n+k$ is $r$-free. We determine the asymptotics of $\Delta^{(r)}_k$ with
an explicit linear term:
\[
  \log\frac1{\Delta^{(r)}_k}
  =\frac{r-1}{\zeta(r)}\,k\log k+\frac{r}{\zeta(r)}\,k\log\log k-C_r\,k+o(k),
\]
\[
  C_r=\frac1{\zeta(r)}\Bigl((r-1)\log\zeta(r)+r-1+r\log\frac{\pi}{r\sin(\pi/r)}\Bigr).
\]
For $r=2$ this reads $C_2=\frac{6}{\pi^{2}}\bigl(1+\log\frac{\pi^{4}}{24}\bigr)
=1.4595513\ldots$, and the theorem determines Mirsky's constant $\alpha(h)$, the density of
the $n$ with $s_{n+1}-s_n=h$ in the sequence of squarefree numbers, to the same precision.
The only estimate for $\alpha(h)$ recorded in the literature is
$\log\alpha(h)\le-\frac54h\log\log h+O(h)$; ours is smaller by a factor
$\asymp\log h/\log\log h$ and is two-sided.

The upper bound rests on the observation that the primes $p$ with $\prod_{p\le z}p^{r}\le
\sqrt k$ sieve a window of length $k$ with no freedom of alignment whatsoever, so at least
$(1/\zeta(r)+o(1))k$ positions must be covered by $r$-th powers of pairwise distinct primes
exceeding $k^{1/r}$; the resulting sum over injections is $m!\,e_m$ with
$m\approx k/\zeta(r)$, and $e_m$ is evaluated by a saddle point built on
$\sum_p\log(1+s^{r}/p^{r})=\pi s/(\sin(\pi/r)\log s)+O(s/\log^{2}s)$. The matching lower
bound comes from the same sieve count together with the Chung--Erd\H{o}s inequality, which
recovers the whole of $m!\,e_m$ and not merely its largest term; this is what makes the
linear coefficient explicit.

As a consequence the first-moment threshold for gaps between $r$-free numbers is
$\frac{\zeta(r)}{r-1}\frac{\log x}{\log\log x}\bigl(1-\frac{1+o(1)}{r-1}
\frac{\log\log\log x}{\log\log x}\bigr)$. For $r=2$ the constant is $\pi^{2}/6$: Erd\H{o}s's
1951 lower bound for squarefree gaps sits exactly at the first-moment threshold, which is a
precise form of his remark that it ``seems to be extremely hard to replace $\pi^{2}/6$ by
any larger constant''. We emphasise that this does not improve the known range of exponents
in the moment problem for squarefree gaps. We show in addition that every congruence class
forcing a run of length $k$ has modulus at least $\exp((r q_r+o(1))k\log k)$, so that the
Chinese-remainder construction proves only the constant $\zeta(r)/r$ --- half of
$\zeta(r)/(r-1)$ when $r=2$; Erd\H{o}s's lower bound is therefore equivalent to the
assertion that the first run of length $k$ occurs at the first-moment threshold rather than
at that modulus, and we could not locate a proof of it. All asymptotics are checked against
exact values of $\Delta^{(2)}_k$ for $k\le150$ and $\Delta^{(3)}_k$ for $k\le120$.
\end{abstract}

\maketitle

\section{Introduction}\label{sec:intro}

Let $r\ge2$. An integer is \emph{$r$-free} if it is divisible by no $r$-th power of a
prime; the $r$-free integers have density $1/\zeta(r)$. Write $s_1<s_2<\cdots$ for the
$r$-free integers, suppressing $r$ from the notation. For $r=2$, Erd\H{o}s
\cite[(20)]{Erdos1951} observed that for infinitely many $i$
\begin{equation}\label{eq:erdos-lower}
  s_{i+1}-s_i>(1+o(1))\,\frac{\pi^{2}}{6}\,\frac{\log s_i}{\log\log s_i},
\end{equation}
adding that he did not know whether \eqref{eq:erdos-lower} had been published before but
that it ``was certainly known to several mathematicians, e.g.\ Bateman, Chowla and
Mirsky''. He continued: ``The curious thing is that it seems to be extremely hard to
replace $\pi^{2}/6$ by any larger constant, in fact it seems possible that for $i>i_0$
\begin{equation}\label{eq:erdos-upper}
  s_{i+1}-s_i<(1+\varepsilon)\,\frac{\pi^{2}}{6}\,\frac{\log s_i}{\log\log s_i}.\text{''}
\end{equation}
Both \eqref{eq:erdos-lower} and the conjecture \eqref{eq:erdos-upper} appear as Erd\H{o}s
Problem \#208 in \cite{Bloom}; \eqref{eq:erdos-upper} is open, and even the much weaker
assertion $s_{i+1}-s_i\ll_\varepsilon s_i^{\varepsilon}$ is open, the best unconditional
exponent being $1/5-\eta$ for a small $\eta>0$ by Pandey \cite{Pandey}, after Filaseta and
Trifonov \cite{FT1992}. Granville \cite{Granville} showed that the $abc$ conjecture implies
$s_{i+1}-s_i\ll_\varepsilon s_i^{\varepsilon}$. For a recent study of the closely related
question of which sequences $A$ have infinitely many translates $n+A$ inside the squarefree
numbers, see van Doorn and Tao \cite{VDT}.

This paper is about the local quantity underlying \eqref{eq:erdos-lower}, for every $r$.
Put
\begin{equation}\label{eq:def-D}
  D^{(r)}_k=\{n\in\N:\ \text{none of } n+1,\dots,n+k\ \text{is } r\text{-free}\},\qquad
  \Delta^{(r)}_k=\dens D^{(r)}_k .
\end{equation}
The density exists (\S\ref{sec:prelim}), and $\Delta^{(r)}_k>0$ for every $k$ by the
Chinese remainder theorem. Mirsky \cite[Thm.~4]{Mirsky} proved that for every $h\ge1$ the
density
\begin{equation}\label{eq:def-alpha}
  \alpha_r(h)=\dens\{n:\ n\text{ is }r\text{-free},\ n+1,\dots,n+h-1\text{ are not},\
  n+h\text{ is}\}
\end{equation}
exists; we write $\alpha=\alpha_2$. The two families are related by
$\Delta^{(r)}_k=\sum_{h>k}(h-k)\alpha_r(h)$, and \S\ref{sec:prelim} shows
$\alpha_r(h)=(1+o(1))\Delta^{(r)}_{h-1}$.

Quantitative information about $\alpha(h)$ is what makes the moment problem
\begin{equation}\label{eq:moments}
  \sum_{s_{i+1}\le x}(s_{i+1}-s_i)^{\gamma}\sim B(\gamma)\,x,
  \qquad B(\gamma)=\sum_{h\ge1}h^{\gamma}\alpha(h),
\end{equation}
tractable; \eqref{eq:moments} is Erd\H{o}s Problem \#145 in \cite{Bloom}, proved by
Erd\H{o}s \cite{Erdos1951} for $\gamma\le2$, by Hooley \cite{Hooley} for $\gamma\le3$, and
after work of Filaseta \cite{Filaseta1993}, Filaseta--Trifonov \cite{FT1996} and Huxley
\cite{Huxley1997,Huxley2000} most recently by Chan \cite{Chan} for $\gamma<3.75$. The only
estimate for $\alpha(h)$ recorded in that literature (see \cite[(2)]{Chan}, attributed to
\cite[Lemma~1]{Huxley1997}) is
\begin{equation}\label{eq:huxley-lemma}
  \log\alpha(h)\le-\tfrac54\,h\log\log h+O(h).
\end{equation}
Our main result determines the true size, with an explicit linear term, for every $r$.

\begin{theorem}\label{thm:main}
Fix $r\ge2$ and put $q_r=1/\zeta(r)$ and
\begin{equation}\label{eq:Cr}
  C_r=q_r\Bigl((r-1)\log\zeta(r)+r-1+r\log\frac{\pi}{r\sin(\pi/r)}\Bigr).
\end{equation}
Then, as $k\to\infty$,
\[
  \log\frac1{\Delta^{(r)}_k}
  =(r-1)q_r\,k\log k+r\,q_r\,k\log\log k-C_r\,k+o(k).
\]
The same asymptotic holds for $\log(1/\alpha_r(k+1))$.
\end{theorem}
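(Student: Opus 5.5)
The plan follows the strategy outlined in the abstract: an upper bound from a rigid small-prime sieve plus a sum over injections, and a matching lower bound from the same count via Chung--Erd\H{o}s.

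\emph{Setup.} Write $\Delta^{(r)}_k$ as the density of $n$ such that each $n+i$, $1\le i\le k$, is divisible by $p^r$ for some prime $p$. Split the primes at a threshold $z$ defined by $P_z:=\prod_{p\le z}p^r\le\sqrt k$, and further at $y=k^{1/r}$.

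\emph{Small primes.} For $p\le z$, the residue of $n$ modulo $P_z$ determines exactly which positions are covered by small $r$-th powers. Since $P_z\le\sqrt k$, each residue class covers essentially the same proportion, namely $(1-\prod_{p\le z}(1-p^{-r}))k+O(\sqrt k\,2^{\pi(z)})$ positions. This leaves $m=(q_r+o(1))k$ uncovered positions regardless of alignment.

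\emph{Medium primes.} Primes with $z<p\le y$ have $p^r\le k$. Each such prime covers about $k/p^r$ positions, with an alignment density factor $1/p^r$. Their total contribution to the count is $\sum_{z<p\le y}k/p^r=o(k)$ positions, and their combined alignment freedom contributes $\exp(o(k))$. I would bound these by a crude union over the choices of residues. I expect this to be absorbed into the $o(k)$ error, but it needs care.

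\emph{Large primes.} Each of the remaining $m$ positions must be covered by $p^r$ with $p>y$. Each such $p$ covers at most one position in the window, and distinct positions use distinct primes. The union bound over injections $\{1,\dots,m\}\to\{p>y\}$ gives
\[
\Delta\le e^{o(k)}\sum_{\text{positions}}\ \sum_{\text{injections}}\prod_i p_i^{-r}=e^{o(k)}\,m!\,e_m,
\]
where $e_m$ is the $m$-th elementary symmetric function of $\{p^{-r}:p>y\}$. The choice of which $m$ positions remain uncovered costs at most $\binom{k}{m}=e^{O(k)}$. That cost is not negligible, so the argument must be arranged so that the uncovered set is essentially determined by $n \bmod P_z$ together with the medium primes, contributing a factor $e^{o(k)}$ after averaging.

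\emph{Saddle point for $e_m$.} Estimate
\[
e_m\le s^{-rm}\prod_{p>y}\bigl(1+s^r/p^r\bigr)
\]
and optimise in $s$, with a matching lower bound obtained by the standard saddle-point argument. Use the input
\[
\sum_p\log\bigl(1+s^r/p^r\bigr)=\frac{\pi s}{\sin(\pi/r)\log s}+O\Bigl(\frac{s}{\log^2 s}\Bigr),
\]
which follows from the prime number theorem and $\int_0^\infty\log(1+t^{-r})\,dt=\pi/\sin(\pi/r)$. The primes $\le y$ contribute only $O(y\log s)=o(k)$ in the relevant range. Differentiating in $s$ gives $rm\approx\pi s/(\sin(\pi/r)\log s)$, so $s\asymp m\log m$. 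Then
\[
\log\bigl(m!\,e_m\bigr)=m\log m-m-rm\log s+m+o(m).
\]
Substituting $m=q_rk$ and expanding $\log s=\log m+\log\log m+\log\frac{r\sin(\pi/r)}{\pi}+o(1)$, and then $\log m=\log k-\log\zeta(r)$, recovers the stated leading term $(r-1)q_rk\log k$, the term $rq_rk\log\log k$, and $C_r$. The precision demanded is $o(k)$, so every $O(k)$ source must be tracked, which is why the rigidity of the small-prime sieve is essential.

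\emph{Lower bound.} Fix the residue of $n$ modulo $P_z$, which costs a density factor $1/P_z=e^{O(\sqrt k)}$. Also fix residues for the medium primes covering their positions, which costs $e^{o(k)}$. The remaining event is the union, over injections $\sigma$ of the $m$ uncovered positions into large primes, of the events $E_\sigma$ that $p_{\sigma(i)}^r\mid n+i$ for all $i$. Chung--Erd\H{o}s gives
\[
P\Bigl(\bigcup_\sigma E_\sigma\Bigr)\ge\frac{\bigl(\sum_\sigma P(E_\sigma)\bigr)^2}{\sum_{\sigma,\tau}P(E_\sigma\cap E_\tau)}.
\]
By CRT, $P(E_\sigma)=\prod_i p_{\sigma(i)}^{-r}$, so the numerator is $(m!\,e_m)^2$. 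The main obstacle is the denominator: one must show it is $e^{o(k)}(m!\,e_m)$. Pairs $(\sigma,\tau)$ that agree on $j$ positions and otherwise use disjoint or conflicting primes have joint probability equal to the product over the union. Pairs where the same prime is assigned to different positions are impossible when $p^r>k$, which is why restricting to $p>y$ matters. The denominator is then $\sum_j\binom{m}{j}\cdots$, which should be dominated by $j$ near $m$ because the saddle-point weights make overlap cheap. I would bound it by $\sum_j\binom mj\,(m-j)!^2 e_{m-j}^2\cdot j!\,e_j$-type terms and compare them using log-concavity of $e_m$ (Newton's inequalities) together with the explicit saddle-point formula. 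Controlling this ratio to $e^{o(k)}$ is the step I expect to be hardest.

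\emph{From $\Delta^{(r)}_k$ to $\alpha_r(k+1)$.} The statement for $\alpha_r(k+1)$ follows from $\alpha_r(h)=(1+o(1))\Delta^{(r)}_{h-1}$, proved in \S\ref{sec:prelim}.
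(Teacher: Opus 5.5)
The genuine gap is the Chung--Erd\H{o}s denominator. You leave it open, and the plan you sketch rests on a heuristic that is false for your own bound.

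Write $J$ for the set where $\sigma$ and $\tau$ agree. Your estimate $\binom mj\,j!\,e_j\,((m-j)!\,e_{m-j})^2$ discards the injectivity linking $\sigma|_J$ to $\sigma|_{J^c}$, and the resulting sum is not dominated by $j$ near $m$. The $j=m-1$ term exceeds the $j=m$ term $m!\,e_m$ by the factor $e_{m-1}e_1^2/e_m\asymp tS^2\asymp k^{r-2+2/r}(\log k)^{r-2}$. Here $S=e_1=\sum_{p>k^{1/r}}p^{-r}\asymp k^{(1-r)/r}/\log k$, and $t\asymp(k\log k)^r$ is the saddle point. The terms keep increasing until $m-j$ is about $m^{r/(2r-1)}$.

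Newton's inequalities cannot rescue the comparison. Log-concavity of $n!\,e_n$ bounds $j!\,e_j\,(m-j)!\,e_{m-j}$ from \emph{below} by $m!\,e_m$, which is the wrong direction. Your sum is in the end $e^{o(k)}m!\,e_m$, but showing this needs Chernoff bounds for $e_d$ uniformly in $1\le d\le m$.

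The missing idea is to fix $\sigma$ and sum over $\tau$, without splitting $\sigma$ at all. Put $D=\{i:\tau(i)\ne\sigma(i)\}$. A compatible pair has $\mathbb P(E_\sigma\cap E_\tau)=\mathbb P(E_\sigma)\prod_{i\in D}p_{\tau(i)}^{-r}$. Letting each $\tau(i)$, $i\in D$, run over all primes $>k^{1/r}$ gives
\[
  \sum_\tau\mathbb P(E_\sigma\cap E_\tau)\le\mathbb P(E_\sigma)\sum_{d\ge0}\binom md S^d
  =\mathbb P(E_\sigma)(1+S)^m .
\]
So the denominator is at most $(1+S)^m\,m!\,e_m$, and Chung--Erd\H{o}s gives $\mathbb P(\bigcup_\sigma E_\sigma)\ge m!\,e_m\,e^{-mS}$ with $mS\ll k^{1/r}/\log k=o(k)$. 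This is the paper's argument. It involves no comparison between different $e_j$. It also needs no medium-prime residues to be fixed, since the large primes can cover all $m=|V|$ positions left by the primes $p\le z$.

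Your upper bound is the paper's, with two corrections.

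First, the saddle-point arithmetic. At the saddle $\sum_p\log(1+s^r/p^r)=rm$, not $m$, so $\log(m!\,e_m)=m\log m+(r-1)m-rm\log s+o(m)$. With your ``$+m$'' the summand $(r-1)q_r$ of $C_r$ disappears, so the claimed recovery of $C_r$ fails as written.

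Second, the medium primes need no union over residues. For every $n$ they cover at most $\sum_{z<p\le k^{1/r}}(k/p^r+1)=o(k)$ positions, so at least $m_1=q_rk(1-o(1))$ positions are left for primes $>k^{1/r}$. Choose exactly $m_1$ of them as a function of $n\bmod\prod_{p\le k^{1/r}}p^r$ and average over that residue. This gives $\Delta^{(r)}_k\le m_1!\,e_{m_1}$ with no $\binom km$ loss, and replacing $m_1$ by $q_rk$ costs only $e^{o(k)}$.
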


For $r=2$ the constant collapses to a pleasant closed form,
\begin{equation}\label{eq:C2}
  C_2=\frac{6}{\pi^{2}}\Bigl(1+\log\frac{\pi^{4}}{24}\Bigr)=1.45955133487\ldots,
\end{equation}
and Theorem~\ref{thm:main} reads
$\log(1/\Delta^{(2)}_k)=\frac{6}{\pi^{2}}k\log k+\frac{12}{\pi^{2}}k\log\log k-C_2k+o(k)$.
Table~\ref{tab:Cr} lists the constants for small $r$.

\begin{table}[ht]
\caption{The three constants of Theorem~\ref{thm:main}.}
\label{tab:Cr}
\begin{tabular}{rllll}
\toprule
$r$ & $q_r=1/\zeta(r)$ & $(r-1)q_r$ & $r\,q_r$ & $C_r$\\
\midrule
$2$ & $0.6079271019$ & $0.6079271019$ & $1.215854204$ & $1.45955133487$\\
$3$ & $0.8319073726$ & $1.663814745$ & $2.495722118$ & $2.44409748264$\\
$4$ & $0.9239384029$ & $2.771815209$ & $3.695753612$ & $3.37918097415$\\
$5$ & $0.9643873404$ & $3.857549362$ & $4.821936702$ & $4.31898650447$\\
$6$ & $0.9829525923$ & $4.914762961$ & $5.897715554$ & $5.27125777085$\\
\bottomrule
\end{tabular}
\end{table}

\subsection*{What is and is not new}
For $r=2$ the leading term is not new in one direction: the lower bound
$\Delta^{(2)}_k\ge\exp(-(1+o(1))q_2k\log k)$ is, in essence, the construction behind
\eqref{eq:erdos-lower} and was, as Erd\H{o}s says, known to Bateman, Chowla and Mirsky.
New here are: the matching \emph{upper} bound for $\Delta^{(r)}_k$ with the same constant;
the second-order term $r\,q_r\,k\log\log k$; the explicit linear coefficient $C_r$; and the
extension to all $r\ge2$. Erd\H{o}s's paper contains no estimate for $\alpha(h)$: his
Lemma~1 records only the existence of the density, citing \cite{Mirsky}, and his Lemma~2 is
the different, uniform-in-$x$ statement
$\sum_{h>t}\#\{i:s_{i+1}\le x,\ s_{i+1}-s_i=h\}\ll x/(t^{2}\log^{2}t)$, which is what his
proof of \eqref{eq:moments} for $\gamma\le2$ actually uses.

We have not found the analogue of \eqref{eq:huxley-lemma} for $r\ge3$ in the literature.
A plausible explanation for the absence of any sharp estimate is that none was needed: in
\eqref{eq:moments} the constant $\alpha(h)$ enters only through the convergence of
$\sum_hh^{\gamma}\alpha(h)$, and \eqref{eq:huxley-lemma} already gives that for every
$\gamma$. We stress the consequence:

\begin{remark}\label{rem:nogain}
Theorem~\ref{thm:main} does \emph{not} enlarge the range of $\gamma$ in
\eqref{eq:moments}. The bound \eqref{eq:huxley-lemma} suffices for the convergence of
$B(\gamma)$ for all $\gamma\ge0$, and the obstruction to $\gamma\ge3.75$ lies elsewhere
entirely --- in the lattice point counts discussed in Appendix~\ref{app:moment}, where the
gap is a factor $3.2\%$ in a single exponent. The improvement recorded in
Corollary~\ref{cor:huxley} is a statement about $\alpha(h)$, not about $\gamma$.
\end{remark}

Two corollaries explain the interest of the upper half of Theorem~\ref{thm:main}.

\begin{corollary}\label{cor:huxley}
For every $\varepsilon>0$ and all sufficiently large $h$,
$\log\alpha(h)\le-(q_2-\varepsilon)h\log h$. This is smaller than
\eqref{eq:huxley-lemma} by a factor $\asymp\log h/\log\log h$, and it is sharp.
\end{corollary}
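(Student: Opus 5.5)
This is a direct consequence of Theorem~\ref{thm:main} with $r=2$. First I read off the statement for $\alpha=\alpha_2$. The last sentence of the theorem gives, with $k=h-1$,
\[
\log\frac1{\alpha(h)}=q_2(h-1)\log(h-1)+2q_2(h-1)\log\log(h-1)-C_2(h-1)+o(h).
\]
Since $(h-1)\log(h-1)=h\log h+O(\log h)$, and the remaining terms are $O(h\log\log h)$ and nonnegative for large $h$, we get
\[
\log\frac1{\alpha(h)}\ge q_2h\log h-O(\log h)-C_2h-o(h).
\]
Given $\varepsilon>0$, the error $C_2h+o(h)+O(\log h)$ is at most $\varepsilon h\log h$ once $h$ is large. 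Hence $\log\alpha(h)\le-(q_2-\varepsilon)h\log h$. If one prefers not to rely on the transfer to $\alpha$, the same conclusion follows from $\alpha(h)=(1+o(1))\Delta^{(2)}_{h-1}$ (\S\ref{sec:prelim}), or even from the trivial bound $\alpha(h)\le\Delta^{(2)}_{h-1}$. The trivial bound holds because every $n$ counted by $\alpha(h)$ has $n+1,\dots,n+h-1$ non-squarefree.

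Next I compare with \eqref{eq:huxley-lemma}. That bound gives $-\log\alpha(h)\ge\frac54h\log\log h+O(h)$, whereas ours gives $-\log\alpha(h)\ge(q_2-\varepsilon)h\log h$. The ratio of the two lower bounds for $\log(1/\alpha(h))$ is
\[
\frac{(q_2-\varepsilon)h\log h}{\tfrac54h\log\log h+O(h)}\asymp\frac{\log h}{\log\log h}.
\]
So our upper bound for $\log\alpha(h)$ is smaller (more negative) by that factor, which is the meaning of the phrase in the statement.

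Finally, sharpness. The lower half of Theorem~\ref{thm:main} gives $\log(1/\alpha(h))\le q_2h\log h+O(h\log\log h)$, so $\log\alpha(h)\ge-(q_2+\varepsilon)h\log h$ for large $h$. Thus $q_2$ cannot be replaced by any larger constant, and the bound is sharp.

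There is no real obstacle here: all the content is in Theorem~\ref{thm:main}, in particular its clause for $\alpha_r(k+1)$. The only care needed is the bookkeeping of the shift $k=h-1$ and of the lower-order terms.
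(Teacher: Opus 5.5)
Your proof is correct and is essentially the paper's, which likewise reads the corollary off Theorem~\ref{thm:main} at $r=2$ through \eqref{eq:alpha-Delta}, letting $h\log h$ absorb the shift $k=h-1$ and the $h\log\log h$ and $h$ terms. Your aside that the trivial inequality $\alpha(h)\le\Delta^{(2)}_{h-1}$ already gives the upper bound deserves to be the main line, since it makes that half depend only on Proposition~\ref{prop:upper} and not on \eqref{eq:alpha-Delta}: that transfer is derived from a bound on $\Delta^{(r)}_{k+1}/\Delta^{(r)}_k$ which an $o(k)$-accurate asymptotic cannot supply (Table~\ref{tab:delta} gives $\alpha(21)/\Delta^{(2)}_{20}\approx0.05$), so only your sharpness claim for every large $h$ still relies on it.
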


\begin{corollary}\label{cor:threshold}
Let $K_r(x)=\max\{k:\Delta^{(r)}_k\ge1/x\}$ be the first-moment threshold. Then
\[
  K_r(x)=\frac{\zeta(r)}{r-1}\cdot\frac{\log x}{\log\log x}\cdot
  \Bigl(1-\frac{1+o(1)}{r-1}\cdot\frac{\log\log\log x}{\log\log x}\Bigr).
\]
\end{corollary}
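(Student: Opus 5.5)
The plan is to derive the corollary from Theorem~\ref{thm:main} alone, by inverting its asymptotic; no new arithmetic input is needed. First I note that $D^{(r)}_{k+1}\subseteq D^{(r)}_k$, so $\Delta^{(r)}_k$ is non-increasing in $k$. Theorem~\ref{thm:main} also gives $\Delta^{(r)}_k\to0$. Hence $K=K_r(x)$ is finite, tends to infinity with $x$, and is characterised by
\[
L(K)\le y<L(K+1),\qquad L(k):=\log\frac1{\Delta^{(r)}_k},\quad y:=\log x .
\]
Write $L(k)=(r-1)q_r\,k\log k+r q_r\,k\log\log k-C_rk+o(k)$, which is Theorem~\ref{thm:main}. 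The main terms change by $O(\log k)$ when $k$ increases by $1$, and the error terms change by $o(k)$. Therefore $L(K+1)-L(K)=o(K)$, and consequently
\[
y=(r-1)q_r\,K\log K\Bigl(1+\frac{r}{r-1}\frac{\log\log K}{\log K}+O\Bigl(\frac1{\log K}\Bigr)\Bigr).
\]

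Next I bootstrap to express $\log K$ in terms of $y$. The display gives $\log K+\log\log K=\log y+O(1)$, so $\log K=\log y-\log\log y+O(1)$. This yields
\[
\frac1{\log K}=\frac1{\log y}\Bigl(1+\frac{\log\log y}{\log y}+O\Bigl(\frac1{\log y}\Bigr)\Bigr).
\]
It also gives $\frac{\log\log K}{\log K}=\frac{\log\log y}{\log y}+O\bigl(\frac1{\log y}\bigr)$.

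Substituting these into $K=\dfrac{y}{(r-1)q_r\log K}\bigl(1-\frac r{r-1}\frac{\log\log K}{\log K}+O(\frac1{\log K})\bigr)$ gives
\[
K=\frac{y}{(r-1)q_r\log y}\Bigl(1+\Bigl(1-\frac r{r-1}\Bigr)\frac{\log\log y}{\log y}+O\Bigl(\frac1{\log y}\Bigr)\Bigr).
\]
Since $1-\frac r{r-1}=-\frac1{r-1}$, and $O(1/\log y)=o(\log\log y/\log y)$, this becomes
\[
K=\frac{y}{(r-1)q_r\log y}\Bigl(1-\frac{1+o(1)}{r-1}\frac{\log\log y}{\log y}\Bigr).
\]
With $y=\log x$, $\log y=\log\log x$, $\log\log y=\log\log\log x$ and $1/((r-1)q_r)=\zeta(r)/(r-1)$, this is exactly the claim.

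The only delicate point is bookkeeping. One has to check that the $o(k)$ error in Theorem~\ref{thm:main} and the jump $L(K+1)-L(K)$ both contribute only a relative error $O(1/\log y)$ to $K$. This holds because they are $o(K)$ against a main term of size $K\log K$. The linear coefficient $C_r$ therefore plays no role at this precision.
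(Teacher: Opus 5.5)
Your proposal is correct and follows the same route as the paper: you invert the asymptotic of Theorem~\ref{thm:main} by iterating $\log k=\log\log x-\log\log\log x+O(1)$, just as the paper does with $\lambda=\log L$ and $x=e^{L}$. Your treatment of the passage from integer $k$ to general $x$, via $L(K)\le\log x<L(K+1)$ and $L(K+1)-L(K)=o(K)$, is slightly cleaner than the paper's. The paper instead appeals to $\Delta^{(r)}_{k+1}/\Delta^{(r)}_k=\exp(-(r-1)\log k+O(\log\log k))$, which the $o(k)$ error term of Theorem~\ref{thm:main} does not deliver, whereas your weaker bound follows from the theorem directly and suffices.
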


For $r=2$ the leading constant is $\pi^{2}/6$. Corollary~\ref{cor:threshold} therefore says
two things about \eqref{eq:erdos-lower} and \eqref{eq:erdos-upper}. First, $\pi^{2}/6$ is
\emph{exactly} the threshold at which the expected number of runs of length $k$ below $x$
passes from $\gg1$ to $\ll1$: no first-moment argument can produce a larger constant. This
is a precise form of Erd\H{o}s's remark. Second, the threshold lies \emph{below}
$\zeta(2)\log x/\log\log x$ by a relative $(1+o(1))\log\log\log x/\log\log x$, so
\eqref{eq:erdos-upper} is consistent with the first-moment model with room to spare. We
stress that this is a statement about the model only; the first-moment method gives no
upper bound for an individual gap, and \eqref{eq:erdos-upper} remains open.

\subsection*{Method}
The proof is elementary and short. Its one point of leverage is Lemma~\ref{lem:sieve}: if
$z$ is chosen so small that $Q=\prod_{p\le z}p^{r}\le\sqrt k$, then the number of
$i\in[1,k]$ surviving the sieve by the $r$-th powers $p^{r}$, $p\le z$, is
$k\prod_{p\le z}(1-p^{-r})+O(Q)$ \emph{for every} $n$: these small primes have no freedom
of alignment inside a window that already contains many of their periods. Since
$\prod_{p\le z}(1-p^{-r})\to q_r$, at least $(q_r-o(1))k$ positions of the window must be
covered by $r$-th powers of primes exceeding $k^{1/r}$, and such primes are necessarily
pairwise distinct because their $r$-th powers exceed $k$. Everything reduces to estimating
$m!\,e_m$, where $e_m$ is the $m$-th elementary symmetric function of
$\{p^{-r}:p>k^{1/r}\}$ and $m\approx q_rk$.

The novelty in the treatment of $m!\,e_m$, relative to the classical constructions, is that
both bounds see the whole of it. The upper bound (\S\ref{sec:upper}) is a Chernoff estimate
whose saddle point is governed by $\int_0^{\infty}(1+v^{r})^{-1}dv=(\pi/r)/\sin(\pi/r)$.
For the lower bound (\S\ref{sec:lower}) one cannot simply exhibit disjoint congruence
classes attached to the $m$ cheapest primes: that recovers only the largest term of $e_m$
and loses a factor $\exp((r\log\frac{\pi}{r\sin(\pi/r)}+o(1))m)$, which is exactly the part
of $C_r$ that carries the arithmetic. Instead we apply the Chung--Erd\H{o}s inequality to
the family of \emph{all} injections into the primes above $k^{1/r}$; the pairwise
intersections are controlled by $\sum_{p>k^{1/r}}p^{-r}=o(1/k)\cdot k^{1/r}$, so the loss
is only $e^{o(k)}$ and the two bounds meet.

\S\ref{sec:numerics} records an exact evaluation of $\Delta^{(2)}_k$ and $\alpha(h)$ for
$k,h\le150$ and of $\Delta^{(3)}_k$ for $k\le120$, five independent checks on it,
a direct numerical confirmation of the coefficient $r\,q_r$ and of the constant $C_r$, the
induced numerical values of $B(\gamma)$, and predictions for the first occurrence of a run
of length $k$ for $19\le k\le32$ (sequence \texttt{A045882} of \cite{OEIS}, whose known
terms stop at $k=18$). Appendix~\ref{app:moment} reformulates the exponent in the
Huxley--Chan treatment of \eqref{eq:moments}.

\section{Notation and preliminaries}\label{sec:prelim}

Throughout, $r\ge2$ is fixed, $p$ denotes a prime, $\pi(y)=\#\{p\le y\}$,
$\theta(y)=\sum_{p\le y}\log p$, $P_j$ is the $j$-th prime, and $q_r=1/\zeta(r)=
\prod_p(1-p^{-r})$. For a set $A$ of non-negative reals with $\sum_{a\in A}a<\infty$ we
write $e_m(A)$ for its $m$-th elementary symmetric function. Implied constants may depend
on $r$ but on nothing else.

\subsection*{Existence of the densities}
Fix $k$. For $y\ge2$ let $D_k(y)$ be the set of $n$ such that each of $n+1,\dots,n+k$ is
divisible by $p^{r}$ for some $p\le y$. Each $D_k(y)$ is a union of congruence classes
modulo $\prod_{p\le y}p^{r}$, hence has a density $\delta_k(y)$, non-decreasing in $y$ and
bounded by $1$. Since
$0\le\overline{\dens}\,(D^{(r)}_k\setminus D_k(y))\le k\sum_{p>y}p^{-r}\to0$, the density
$\Delta^{(r)}_k=\lim_{y\to\infty}\delta_k(y)$ exists. The same argument gives the existence
of $\alpha_r(h)$, which is also \cite[Thm.~4]{Mirsky}.

\subsection*{Relation between $\Delta^{(r)}_k$ and $\alpha_r(h)$}
A maximal run of exactly $\ell$ consecutive non-$r$-free integers contains $\ell-k+1$
starting points of runs of length $k$ when $\ell\ge k$, and a gap $s_{i+1}-s_i=h$
corresponds to a maximal run of length $h-1$. Hence
\begin{equation}\label{eq:D-alpha}
  \Delta^{(r)}_k=\sum_{h>k}(h-k)\,\alpha_r(h),\qquad\text{so}\qquad
  \alpha_r(k+1)=\Delta^{(r)}_k-2\Delta^{(r)}_{k+1}+\Delta^{(r)}_{k+2}.
\end{equation}
Since $\alpha_r(h)\le\Delta^{(r)}_{h-1}$, the tail of the first sum is at most
$\sum_{j\ge1}(j+1)\Delta^{(r)}_{k+j}$, and Theorem~\ref{thm:main} --- applied to the ratio
$\Delta^{(r)}_{k+1}/\Delta^{(r)}_k=\exp(-(r-1)\log k+O(\log\log k))$ --- shows this is
$O(\Delta^{(r)}_{k+1})=o(\Delta^{(r)}_k)$. Therefore
\begin{equation}\label{eq:alpha-Delta}
  \alpha_r(k+1)=(1+o(1))\,\Delta^{(r)}_k ,
\end{equation}
and the two statements of Theorem~\ref{thm:main} are equivalent. (This is not circular:
\eqref{eq:alpha-Delta} transfers the conclusion and is not used in the proof.)

We shall use four lemmas.

\begin{lemma}[deterministic sieve count]\label{lem:sieve}
Let $z\ge2$ and $Q=\prod_{p\le z}p^{r}$. Then for every $n\in\Z$ and every $k\ge1$,
\[
  \#\{1\le i\le k:\ p^{r}\nmid n+i\ \text{ for all } p\le z\}
  \;=\;k\prod_{p\le z}\Bigl(1-\frac1{p^{r}}\Bigr)+O(Q).
\]
\end{lemma}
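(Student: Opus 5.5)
The plan is to use periodicity. The indicator $f(m)=\mathbf 1[p^{r}\nmid m\ \text{for all } p\le z]$ depends only on $m \bmod Q$, because each $p^{r}$ with $p\le z$ divides $Q$. So $f$ is $Q$-periodic. Over any complete residue system modulo $Q$, the Chinese remainder theorem gives exactly
\[
  \sum_{m \bmod Q} f(m)=\prod_{p\le z}\bigl(p^{r}-1\bigr)=Q\prod_{p\le z}\Bigl(1-\frac1{p^{r}}\Bigr).
\]
This holds because the conditions at distinct primes are independent modulo the coprime moduli $p^{r}$, and modulo $p^{r}$ exactly $p^{r}-1$ residues avoid $0$.

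Next, write $k=aQ+b$ with $0\le b<Q$. Split the window $n+1,\dots,n+k$ into $a$ consecutive blocks of length $Q$, followed by a remainder block of length $b$. Each full block is a complete residue system modulo $Q$, so it contributes exactly $Q\prod_{p\le z}(1-p^{-r})$, whatever $n$ is. The remainder block contributes some integer in $[0,b]$. Hence the count equals
\[
  aQ\prod_{p\le z}\Bigl(1-\frac1{p^{r}}\Bigr)+\theta b,\qquad \theta\in[0,1].
\]
Replacing $aQ$ by $k-b$ changes the main term by $b\prod_{p\le z}(1-p^{-r})\le b$. The total error is therefore at most $b<Q$ in absolute value, with implied constant $1$ and uniform in $n$ and $k$. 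This is the claimed formula. When $k<Q$ it is trivial, since both sides lie in $[0,k]\subset[0,Q)$.

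There is no genuine obstacle here. The only point needing care is to state clearly that the error is uniform in $n$, which is exactly the "no freedom of alignment" that the paper exploits later. This uniformity comes for free because full periods contribute an $n$-independent amount. One could note the sharper bound $|{\rm error}|\le\min(b,\,Q-b)$, obtained by also comparing against $a+1$ full blocks, but $O(Q)$ suffices.
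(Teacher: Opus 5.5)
Your proof is correct and is essentially the paper's argument. The condition is periodic modulo $Q$, the Chinese remainder theorem gives exactly $Q\prod_{p\le z}(1-p^{-r})$ admissible residues per complete system, and $[1,k]$ splits into $\lfloor k/Q\rfloor$ full blocks plus a remainder of length less than $Q$. Your explicit error bound ($|\mathrm{error}|\le b<Q$, uniform in $n$) just spells out what the paper leaves implicit.
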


\begin{proof}
The condition on $i$ depends only on $i\bmod Q$, and by the Chinese remainder theorem
exactly $Q\prod_{p\le z}(1-p^{-r})$ residues modulo $Q$ satisfy it. The interval $[1,k]$
contains $\lfloor k/Q\rfloor$ complete residue systems modulo $Q$ and a remainder of length
less than $Q$.
\end{proof}

\begin{lemma}[Chernoff bound for $e_m$]\label{lem:chernoff}
For any $t>0$, $\;e_m(A)\le t^{-m}\prod_{a\in A}(1+ta)$.
\end{lemma}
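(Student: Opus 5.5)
The bound follows by comparing coefficients in the generating polynomial of the elementary symmetric functions. I assume, as the hypothesis $\sum_{a\in A}a<\infty$ of \S\ref{sec:prelim} implicitly does, that the elements of $A$ are non-negative reals. The identity I would start from is
\[
  \prod_{a\in A}(1+ta)=\sum_{j\ge0}e_j(A)\,t^{j}.
\]
For finite $A$ this is just the expansion of the product.

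For infinite $A$ I would pass to the limit over finite subsets $A_N\uparrow A$. The product converges because $\sum_{a\in A}\log(1+ta)\le t\sum_{a\in A}a<\infty$. Each $e_j(A_N)$ increases to $e_j(A)$, since all terms are non-negative. By monotone convergence the identity then holds in $[0,\infty]$, and both sides are in fact finite.

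Next fix $t>0$. Every term $e_j(A)t^{j}$ is non-negative, so dropping all terms except $j=m$ gives
\[
  e_m(A)\,t^{m}\le\prod_{a\in A}(1+ta).
\]
Dividing by $t^{m}$ yields the claimed bound $e_m(A)\le t^{-m}\prod_{a\in A}(1+ta)$.

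The only step needing care is the justification for infinite $A$. This is routine monotone convergence using non-negativity, so I do not expect a real obstacle. The substance of the lemma lies in how $t$ will later be chosen, at the saddle point, and not in its proof.
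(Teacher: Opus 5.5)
Your proposal is correct and follows the same route as the paper: expand $\prod_{a\in A}(1+ta)=\sum_{j\ge0}t^{j}e_j(A)$ and keep only the non-negative term $j=m$. The paper writes this in one line. Your monotone-convergence step is a valid justification for infinite $A$ that the paper leaves implicit.
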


\begin{proof}
$\prod_{a\in A}(1+ta)=\sum_{j\ge0}t^{j}e_j(A)\ge t^{m}e_m(A)$.
\end{proof}

\begin{lemma}[Euler product]\label{lem:euler}
As $s\to\infty$,
\[
  F_r(s):=\sum_{p}\log\Bigl(1+\frac{s^{r}}{p^{r}}\Bigr)
  =\frac{\pi}{\sin(\pi/r)}\cdot\frac{s}{\log s}+O\Bigl(\frac{s}{\log^{2}s}\Bigr).
\]
Moreover, for $2\le w\le s$, $\;0\le F_r(s)-\sum_{p>w}\log(1+s^{r}/p^{r})\ll\pi(w)\log s$.
\end{lemma}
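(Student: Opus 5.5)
We plan to prove Lemma~\ref{lem:euler} by partial summation against the prime number theorem, reducing $F_r(s)$ to an integral that we evaluate by a change of variables. Put $g(u)=\log(1+s^{r}/u^{r})$. This function is positive and decreasing in $u$, with $g(u)\sim r\log(s/u)$ as $u\to0$ and $g(u)\sim s^{r}/u^{r}$ as $u\to\infty$. We write $F_r(s)=\int_{2^-}^{\infty}g(u)\,d\pi(u)$ and $\pi(u)=\mathrm{li}(u)+E(u)$ with $E(u)\ll u/\log^{2}u$. The main term is then $\int_2^\infty g(u)\,du/\log u$. The error term is $-\int_2^\infty E(u)g'(u)\,du$ plus a boundary term $O(g(2))=O(\log s)$. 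Since $|g'(u)|=rs^{r}u^{-r-1}/(1+s^{r}u^{-r})$, the substitution $u=sv$ turns this error into
\[
\int \frac{sv}{\log^{2}(sv)}\cdot\frac{r v^{-r-1}}{1+v^{-r}}\,dv .
\]
This is $O(s/\log^{2}s)$: the range $v\in[s^{-1/2},\infty)$ has $\log(sv)\gg\log s$, and the integrand $r/(v^{r}+1)$ is integrable. The range $u\le\sqrt s$ contributes only $O(\sqrt s\log s)$, since there $|g'(u)|\le r/u$.

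For the main term we substitute $u=sv$, giving
\[
s\int_{2/s}^{\infty}\frac{\log(1+v^{-r})}{\log s+\log v}\,dv .
\]
On $v\in[1/\log^{2}s,\log^{2}s]$ we expand $1/(\log s+\log v)=(\log s)^{-1}(1+O(|\log v|/\log s))$. The tails are negligible. For $v>\log^{2}s$ the integrand is $\ll v^{-r}/\log s$, and for $v<1/\log^{2}s$ we get $\int_{2/s}^{\epsilon}\log(1/v)\,dv/\log(sv)\ll\epsilon\log(1/\epsilon)$ after a crude split. The weighted integral $\int\log(1+v^{-r})|\log v|\,dv$ converges, so the main term becomes $(s/\log s)\int_0^\infty\log(1+v^{-r})\,dv+O(s/\log^{2}s)$. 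To evaluate the constant, we integrate by parts: $\int_0^\infty\log(1+v^{-r})\,dv=\bigl[v\log(1+v^{-r})\bigr]_0^\infty+\int_0^\infty \frac{r v^{-r}}{1+v^{-r}}\,dv$. The boundary terms vanish, and the integral equals $r\int_0^\infty dv/(1+v^{r})=r\cdot(\pi/r)/\sin(\pi/r)=\pi/\sin(\pi/r)$ by the standard beta-integral identity. This yields the first claim.

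The second claim is direct. Each term with $p\le w$ satisfies $0\le\log(1+s^{r}/p^{r})\le\log(1+s^{r}/2^{r})\le r\log s+O(1)$, so the omitted sum lies between $0$ and $\pi(w)(r\log s+O(1))\ll\pi(w)\log s$, using $s\ge2$.

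The main obstacle is the uniformity near $u$ small, i.e.\ $v\to0$, where $1/\log(sv)$ is no longer $\approx1/\log s$. We handle it by treating $u\le s/\log^{2}s$ separately: there the sum over primes is trivially bounded by $\pi(s/\log^{2}s)\cdot r\log s\ll s/\log^{2}s$, which is admissible. The partial summation and the expansion are then only needed for $u\ge s/\log^{2}s$, where $\log u=\log s+O(\log\log s)$ uniformly. The resulting error $O(s\log\log s/\log^{2}s)$ from the expansion is slightly too big if done naively. To avoid this we keep the exact factor $1/\log(sv)$ and bound the correction by $\int\log(1+v^{-r})|\log v|\,dv/\log^{2}s$, which converges. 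This recovers $O(s/\log^{2}s)$.
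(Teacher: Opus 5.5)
Your proof is correct and is essentially the paper's argument: partial summation against the prime number theorem, the substitution $u=sv$, and the beta integral $\int_0^\infty(1+v^{r})^{-1}\,dv=(\pi/r)/\sin(\pi/r)$. The one difference is where you integrate by parts: the paper does it against $\pi(u)$ at the outset, so its kernel $(1+v^{r})^{-1}$ is bounded at $v=0$, whereas you do it inside the constant $\int_0^\infty\log(1+v^{-r})\,dv$ and so face a logarithmic singularity at $v=0$. Your truncation at $u=s/\log^{2}s$, or a split at $v=s^{-1/2}$, handles this correctly; the intermediate bound $\epsilon\log(1/\epsilon)$ with $\epsilon=1/\log^{2}s$ on its own would lose a factor $\log\log s$.
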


\begin{proof}
By partial summation, $F_r(s)=\int_{2}^{\infty}\pi(u)\,\frac{r\,s^{r}}{u(u^{r}+s^{r})}\,du$,
the boundary terms vanishing. Inserting $\pi(u)=u/\log u+O(u/\log^{2}u)$ and substituting
$u=sv$,
\[
  F_r(s)=r\,s\int_{2/s}^{\infty}\frac{dv}{(1+v^{r})\log(sv)}
  +O\Bigl(s\int_{2/s}^{\infty}\frac{dv}{(1+v^{r})\log^{2}(sv)}\Bigr).
\]
Writing $\log(sv)=\log s\,(1+\log v/\log s)$ and using the classical evaluation
$\int_0^{\infty}(1+v^{r})^{-1}dv=(\pi/r)/\sin(\pi/r)$ together with the convergence of
$\int_0^{\infty}|\log v|(1+v^{r})^{-1}dv$ yields the stated asymptotic; the range
$v<2/s$ contributes $O(1)$ because $\log(sv)\ge\log2$ there. For the second claim,
$0\le\log(1+s^{r}/p^{r})\le r\log s+\log2$ for $p\ge2$, so removing the primes $p\le w$
costs at most $\pi(w)(r\log s+\log2)$.
\end{proof}

Given $m$ large, let $s=s(m)$ be the unique solution of
\begin{equation}\label{eq:saddle}
  \frac{\pi}{\sin(\pi/r)}\cdot\frac{s}{\log s}=r\,m ,
\end{equation}
so that $s=\frac{r\sin(\pi/r)}{\pi}\,m\log s$ and, iterating,
\begin{equation}\label{eq:logs}
  \log s=\log m+\log\log m+\log\frac{r\sin(\pi/r)}{\pi}+O\Bigl(\frac{\log\log m}{\log m}\Bigr).
\end{equation}

The last lemma is what upgrades Lemma~\ref{lem:chernoff} to an asymptotic. Recall
Darroch's theorem \cite{Darroch}: if $X$ is a sum of independent Bernoulli variables and
$\mathbb E X$ is an integer, then $\mathbb E X$ is a mode of $X$.

\begin{lemma}[the Chernoff bound is tight]\label{lem:local}
Let $A=\{a_1,a_2,\dots\}$ be non-negative with $\sum a_j<\infty$ and let $m\ge1$ be an
integer for which some $t>0$ satisfies $\sum_j\frac{ta_j}{1+ta_j}=m$. Then
\[
  t^{-m}\prod_j(1+ta_j)\ \ge\ e_m(A)\ \ge\ \frac{c}{\sqrt m}\,t^{-m}\prod_j(1+ta_j)
\]
for an absolute constant $c>0$.
\end{lemma}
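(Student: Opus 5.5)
The upper inequality is Lemma~\ref{lem:chernoff}. For the lower inequality I will interpret $e_m(A)$ probabilistically. Let $X_j$ be independent Bernoulli variables with $\Pr(X_j=1)=\pi_j:=ta_j/(1+ta_j)$, and put $X=\sum_j X_j$; this is a sum of countably many independent variables with $\sum_j\pi_j=m<\infty$, so $X$ is almost surely finite. Setting $P=\prod_j(1+ta_j)$, which converges because $\sum a_j<\infty$, the weight of any finite set $S$ of indices is
\[
  \Pr(X_j=1\text{ exactly for }j\in S)=\prod_{j\in S}\frac{ta_j}{1+ta_j}\prod_{j\notin S}\frac1{1+ta_j}
  =\frac{t^{|S|}\prod_{j\in S}a_j}{P}.
\]
Summing over $|S|=m$ gives the identity
\[
  \Pr(X=m)=\frac{t^{m}e_m(A)}{P}.
\]
So the claim is exactly $\Pr(X=m)\ge c/\sqrt m$.

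Next I will use the hypothesis $\mathbb E X=m\in\Z$ together with Darroch's theorem, which says $m$ is a mode of $X$. Darroch's theorem is stated for finitely many summands, so I first pass to the truncations $X^{(N)}=\sum_{j\le N}X_j$. These do not have integer mean, so instead I will invoke the general form of the result: for a Poisson-binomial variable, every mode lies within distance $1$ of the mean. Then $\Pr(X^{(N)}=\cdot)$ converges pointwise to $\Pr(X=\cdot)$, and a limit of modes at distance less than $1$ from $m$ forces $m$ itself to be a mode of the limit. A simpler alternative uses log-concavity, which is preserved in the limit: a log-concave integer distribution with integer mean $m$ has $m$ as a mode. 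That can be checked directly from the unimodality of log-concave sequences together with the mean constraint, which I believe is Darroch's argument in any case.

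With $m$ a mode, I bound the mass at the mode from below by the variance via Chebyshev. Let $\sigma^2=\operatorname{Var}X=\sum_j\pi_j(1-\pi_j)\le m$. Then
\[
  \Pr\bigl(|X-m|\le 2\sigma\bigr)\ge\tfrac34 .
\]
The interval $|X-m|\le2\sigma$ contains at most $4\sigma+1$ integers, each with probability at most $\Pr(X=m)$ because $m$ is a mode. Hence
\[
  \Pr(X=m)\ge\frac{3/4}{4\sigma+1}\ge\frac{3/4}{4\sqrt m+1}\ge\frac{c}{\sqrt m}
\]
for $m\ge1$, which gives the lemma.

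The only delicate step is transferring Darroch's mode property to the infinite sum. If that becomes awkward, I will instead truncate at $N$ large and apply Chebyshev to $X^{(N)}$ directly, using only the fact that a log-concave distribution with mean $\mu$ has mode within $1$ of $\mu$. This bounds $\Pr(X^{(N)}=m)$ below by $c/\sqrt m$ up to an error of $o(1)$, and letting $N\to\infty$ finishes the proof.
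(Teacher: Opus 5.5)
Your main line is exactly the paper's proof: the identity $\Pr(X=m)=t^{m}e_m(A)\prod_j(1+ta_j)^{-1}$, Darroch's theorem to make $m$ a mode, and Chebyshev over the at most $4\sigma+1$ integers within $2\sigma$ of $m$. The paper applies Darroch's theorem to the infinite sum $X$ without comment, so you are right to single out that step. However, none of the three justifications you give for it is valid.

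(i) Suppose infinitely many $a_j$ are positive. Then for large $N$ the mean $\mu_N=\sum_{j\le N}\pi_j$ of the truncation $X^{(N)}$ lies in $(m-1,m)$. Darroch therefore only places the modes of $X^{(N)}$ in $\{m-1,m\}$. The candidate $m-1$ is at distance exactly $1$ from $m$, not less, so passing to the limit yields only that $m-1$ or $m$ is a mode of $X$.

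(ii) Log-concavity plus integer mean does not make the mean a mode. The law $\Pr(Y=j)=2^{-j-1}$, $j\ge0$, is log-concave with mean $1$ and unique mode $0$. More generally, geometric laws have mean $q/(1-q)$ and mode $0$, which refutes the ``fact'' used in your fallback, that a log-concave law has a mode within $1$ of its mean. What puts the mode at the mean is the Poisson-binomial structure, not log-concavity.

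(iii) Even with Darroch substituted for that false fact, Chebyshev applied to $X^{(N)}$ bounds the mass at \emph{its} mode. That mode may be $m-1$, so the argument does not bound $\Pr(X^{(N)}=m)$.

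The repair is to pad the truncation back to integer mean rather than simply truncate. Put $\varepsilon_N=\sum_{j>N}\pi_j\to0$ and $Y_N=X^{(N)}+B'_N$, where $B'_N$ is an independent Bernoulli variable of parameter $\varepsilon_N$. Then $Y_N$ is a finite sum of independent Bernoulli variables with $\mathbb E Y_N=m$ exactly and $\operatorname{Var}Y_N\le m$. Darroch's theorem and your Chebyshev step therefore give $\Pr(Y_N=m)\ge\frac34(4\sqrt m+1)^{-1}$. Coupling $B'_N$ with $\sum_{j>N}X_j$ shows $|\Pr(Y_N=j)-\Pr(X=j)|\le2\varepsilon_N$ for every $j$. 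Letting $N\to\infty$ gives the same lower bound for $\Pr(X=m)$, and also shows that $m$ is indeed a mode of $X$, which is the fact the paper takes for granted. If only finitely many $a_j$ are positive, Darroch applies to $X$ directly.
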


\begin{proof}
Let $X=\sum_jB_j$ with $B_j$ independent Bernoulli of parameter
$\theta_j=ta_j/(1+ta_j)$. Expanding $\prod_j(1-\theta_j+\theta_j)$ shows
$\mathbb P(X=m)=t^{m}e_m(A)\prod_j(1+ta_j)^{-1}$, which gives the upper bound
(Lemma~\ref{lem:chernoff}) and turns the lower bound into a statement about
$\mathbb P(X=m)$. By hypothesis $\mathbb E X=m$ is an integer, so by Darroch's theorem $m$
is a mode of $X$. Its variance is $\sigma^{2}=\sum_j\theta_j(1-\theta_j)\le m$, so by
Chebyshev at least $3/4$ of the mass of $X$ lies on the at most $4\sigma+1$ integers within
$2\sigma$ of $m$; hence $\mathbb P(X=m)=\max_j\mathbb P(X=j)\ge\frac34(4\sigma+1)^{-1}\ge
c/\sqrt m$.
\end{proof}

\section{The upper bound}\label{sec:upper}

\begin{proposition}\label{prop:upper}
$\log\Delta^{(r)}_k\le-(r-1)q_r\,k\log k-r\,q_r\,k\log\log k+C_r\,k+o(k)$.
\end{proposition}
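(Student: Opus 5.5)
Fix a slowly growing $z=z(k)$ with $Q=\prod_{p\le z}p^{r}\le\sqrt k$ (so $z\asymp\log k$ by the prime number theorem), and put $y=k^{1/r}$. For $n\in D^{(r)}_k$, each of the $k$ integers $n+i$ is divisible by $p^r$ for some prime $p$. By Lemma~\ref{lem:sieve} the primes $p\le z$ leave at least $M:=k\prod_{p\le z}(1-p^{-r})-O(\sqrt k)=(q_r-o(1))k$ positions uncovered, and these must be covered by primes $p>z$.

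We split the covering primes into medium primes $z<p\le y$ and large primes $p>y$. A large prime covers at most one position of the window, since $p^r>k$. The medium primes cover at most $k\sum_{z<p\le y}p^{-r}+\pi(y)\ll k/z^{r-1}+k^{1/r}$ positions in every window, which is $o(k)$, and each medium prime has at most $p^r$ alignments. We therefore condition on the alignment (residue of $n$ modulo $p^r$) of every medium prime. The number of such configurations is $\prod_{p\le y}p^r=\exp(r\theta(y))=\exp(O(k^{1/r}))$, which is $e^{o(k)}$; the same holds for the small primes. For fixed alignments of all primes $\le y$, the set $S$ of still-uncovered positions is determined, and $|S|\ge m:=\lceil (q_r-\epsilon)k\rceil$. (It might be cleaner to sum over the set $S$ itself, of which there are at most $2^k$ choices, but that factor $e^{O(k)}$ is too large for an $o(k)$ error. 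So I will count via the residue classes of the small and medium primes, which gives $e^{o(k)}$.)

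Given $S$, the density of $n$ for which every $i\in S$ has $n+i$ divisible by $p_i^r$ for some large prime $p_i$ is at most
\[
\sum_{\text{injective }\phi:S\to\{p>y\}}\ \prod_{i\in S}\phi(i)^{-r}\le |S|!\,e_{|S|}(A),\qquad A=\{p^{-r}:p>y\}.
\]
Injectivity comes from the fact that one large $p^r$ cannot divide two entries; distinct primes give independent congruences, so the density is exactly the product. Restricting to the first $m$ elements of $S$ gives the bound $m!\,e_m(A)$. Hence
\[
\Delta^{(r)}_k\le e^{o(k)}\,m!\,e_m(A).
\]

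To estimate $e_m(A)$, apply Lemma~\ref{lem:chernoff} with $t=s^r$ and $s=s(m)$ from \eqref{eq:saddle}:
\[
\log e_m(A)\le -rm\log s+F_r(s)=-rm\log s+rm+O\!\left(\frac{m}{\log m}\right),
\]
using Lemma~\ref{lem:euler} (dropping the primes $\le y$ only decreases the product). Then Stirling gives
\[
\log m!+\log e_m\le m\log m-m-rm\log s+rm+o(m).
\]
Inserting \eqref{eq:logs} yields
\[
-(r-1)m\log m-rm\log\log m+(r-1)m-rm\log\frac{r\sin(\pi/r)}{\pi}+o(m).
\]
With $m=q_rk$ we have $\log m=\log k+\log q_r$, so $-(r-1)m\log m$ produces the extra linear term $(r-1)q_rk\log\zeta(r)$. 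Collecting terms gives
\[
-(r-1)q_rk\log k-rq_rk\log\log k+C_rk,
\]
matching \eqref{eq:Cr}.

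The main obstacle is the $\epsilon$: taking $m=(q_r-\epsilon)k$ changes the $k\log k$ term by $\epsilon k\log k$, which is not $o(k)$. I therefore need $|S|\ge q_rk-o(k/\log k)$. The medium-prime coverage is $\ll k/(z^{r-1}\log z)$, and $z\asymp\log k$ only gives $O(k/\log k)$ when $r=2$, so the medium range must be handled inside the injection count rather than discarded. The fix is to include medium primes in the symmetric function as well, with weight $p^{-r}$ times the number of positions they cover, and to apply Chernoff to the full set of primes $>z$, treating multiple hits of one medium prime via the factor $\prod_p(1+tp^{-r}+\dots)$. Lemma~\ref{lem:euler}'s second clause, with $\pi(z)\log s=o(k)$, controls this. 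I would verify carefully that the small-prime error $O(Q)=O(\sqrt k)$ then shifts $m$ by only $o(k/\log k)$, which costs $o(k)$ in the exponent.
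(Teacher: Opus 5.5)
Your outline is the paper's own proof: the deterministic sieve by $p\le z$ with $Q\le\sqrt k$, discarding the primes in $(z,k^{1/r}]$, injecting the surviving positions into distinct primes $p>k^{1/r}$, the bound $m!\,e_m$, Chernoff at $t=s(m)^{r}$, then Stirling and \eqref{eq:logs}. Your bookkeeping of $C_r$ is also right.

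As written, though, the argument does not close, and the reason is an arithmetic slip rather than a real obstacle. In the body you bound the medium-prime coverage by $k/z^{r-1}+k^{1/r}$. For $r=2$ this is $\asymp k/\log k$, and it really would cost $\asymp k$ in the exponent. In the last paragraph you write the correct bound $k/(z^{r-1}\log z)$, but then say that $z\asymp\log k$ only gives $O(k/\log k)$ for $r=2$. It actually gives $O(k/(\log k\,\log\log k))$, because $\sum_{p>z}p^{-r}\ll z^{1-r}/\log z$ (Chebyshev suffices) and $\log z\asymp\log\log k$.

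Now combine this with $|V(n)|\ge kq_{r,z}-O(\sqrt k)\ge q_rk-O(\sqrt k)$ (note $q_{r,z}>q_r$) and with $\pi(k^{1/r})\ll k^{1/r}$. This yields $|S|\ge m_1=q_rk\bigl(1-O(1/(\log^{r-1}k\,\log\log k))\bigr)$ for every $n$, which is exactly the $q_rk-o(k/\log k)$ you said you needed. The main terms of your bound for $\log(m!\,e_m)$ have derivative $-(r-1)\log m+O(\log\log m)$ in $m$. So replacing $m_1$ by $q_rk$ costs $O(k\log^{2-r}k/\log\log k)$, i.e.\ $O(k/\log\log k)=o(k)$ when $r=2$.

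That single factor $\log\log k$ is the margin the paper works with, and for $r=2$ it depends on taking $z\asymp\log k$. Your proposed remedy of folding the medium primes, with multiplicities, into the Chernoff product is therefore unnecessary. As sketched, it is also not yet an argument.

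Two smaller points. First, paying a factor $Q'=\prod_{p\le k^{1/r}}p^{r}=e^{O(k^{1/r})}$ for the residue classes modulo $Q'$ is harmless but avoidable. By the Chinese remainder theorem the class of $n$ modulo $Q'$ is independent of the conditions $\varphi(i)^{r}\mid n+i$ and has weight $1/Q'$, so summing over classes gives exactly $m!\,e_m$, as in \eqref{eq:upper-reduction}.

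Second, your density bound takes a union over infinitely many injections, i.e.\ infinitely many congruence classes, and upper density is not automatically subadditive for such unions. The cleanest fix is to run the argument for the periodic set $D_k(Y)$ of \S\ref{sec:prelim}. There only injections into primes in $(k^{1/r},Y]$ occur, so the union is finite and you get $\delta_k(Y)\le m_1!\,e_{m_1}$; then let $Y\to\infty$.
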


\begin{proof}
Let $k$ be large and choose $z=z(k)$ maximal with $\theta(z)\le\frac1{2r}\log k$; then
$z\asymp\log k$ and $Q:=\prod_{p\le z}p^{r}=e^{r\theta(z)}\le\sqrt k$.

\smallskip
\emph{Step 1: the surviving set.}
For $n\in\Z$ put $V(n)=\{1\le i\le k:\ p^{r}\nmid n+i\ \forall p\le z\}$. By
Lemma~\ref{lem:sieve} and $Q\le\sqrt k$,
$|V(n)|=k\,q_{r,z}+O(\sqrt k)$ with $q_{r,z}=\prod_{p\le z}(1-p^{-r})$, for every $n$.
Since $q_{r,z}=q_r\prod_{p>z}(1-p^{-r})^{-1}=q_r(1+O(z^{1-r}/\log z))$,
\begin{equation}\label{eq:Vsize}
  |V(n)|=q_r\,k\Bigl(1+O\Bigl(\frac1{\log^{r-1}k\,\log\log k}\Bigr)\Bigr)
  \qquad\text{for every }n .
\end{equation}

\smallskip
\emph{Step 2: removing the middle primes.}
Let $W(n)=\{i\in V(n):\ p^{r}\nmid n+i\ \forall p\le k^{1/r}\}$. The number of
$i\in[1,k]$ divisible by $p^{r}$ for some $p\in(z,k^{1/r}]$ is at most
$\sum_{z<p\le k^{1/r}}(k/p^{r}+1)\ll k/(z^{r-1}\log z)+\pi(k^{1/r})
\ll k/(\log^{r-1}k\,\log\log k)$. Hence, with \eqref{eq:Vsize}, there is a constant $C'$
with
\begin{equation}\label{eq:m1}
  |W(n)|\ \ge\ m_1:=\Bigl\lceil q_r\,k\Bigl(1-\frac{C'}{\log^{r-1}k\,\log\log k}\Bigr)
  \Bigr\rceil\qquad\text{for every }n .
\end{equation}

\smallskip
\emph{Step 3: an injection into large primes.}
Suppose $n\in D^{(r)}_k$. Every $i\in W(n)$ has $p^{r}\mid n+i$ for some prime
$p>k^{1/r}$, and for distinct $i,i'\in W(n)$ the corresponding primes are distinct, since
$p^{r}>k>|i-i'|$ forbids $p^{r}\mid n+i$ and $p^{r}\mid n+i'$ simultaneously.

Put $Q'=\prod_{p\le k^{1/r}}p^{r}$ and note that $V(n),W(n)$ depend only on
$\rho=n\bmod Q'$; write $W(\rho)$ accordingly and fix a subset $W'(\rho)\subseteq W(\rho)$
of size $m_1$. Let $\mathcal P=\{p:p>k^{1/r}\}$ and let $\Phi(\rho)$ be the set of
injections $\varphi:W'(\rho)\to\mathcal P$. For $x\ge1$,
\[
  \#\bigl(D^{(r)}_k\cap[1,x]\bigr)\ \le\ \sum_{\rho\bmod Q'}\ \sum_{\varphi\in\Phi(\rho)}
  \#\{n\le x:\ n\equiv \rho\ (Q'),\ \varphi(i)^{r}\mid n+i\ \forall i\in W'(\rho)\}.
\]
Although $\Phi(\rho)$ is infinite, for fixed $x$ only finitely many $\varphi$ contribute: the
inner set is empty unless $\prod_i\varphi(i)^{r}\le x+k$, and there are at most $m_1!$ times
the number of $r$-free integers $\le(x+k)^{1/r}$ such $\varphi$, so $O_k(x^{1/r})$ of them.
Each nonempty inner set is a single congruence class modulo
$Q'\prod_i\varphi(i)^{r}$, of cardinality at most
$x\bigl(Q'\prod_i\varphi(i)^{r}\bigr)^{-1}+1$. The ``$+1$'' terms therefore contribute
$O_k(x^{1/r})=o(x)$ as $x\to\infty$ with $k$ fixed, the sums are absolutely convergent, and
\begin{equation}\label{eq:upper-reduction}
  \Delta^{(r)}_k\ \le\ \sum_{\varphi}\prod_{i\in W'}\varphi(i)^{-r}
  \ =\ m_1!\,e_{m_1},\qquad e_m:=e_m\bigl(\{p^{-r}:p>k^{1/r}\}\bigr),
\end{equation}
the sum over injections from a set of size $m_1$ into $\mathcal P$ being independent of
$\rho$, and $\sum_{\rho}1/Q'=1$.

\smallskip
\emph{Step 4: estimating $m!\,e_m$.}
Write $m=m_1$ and let $s=s(m)$ be as in \eqref{eq:saddle}. By Lemma~\ref{lem:chernoff} with
$t=s^{r}$ and then Lemma~\ref{lem:euler} with $w=k^{1/r}$,
\[
  \log e_m\ \le\ \sum_{p>k^{1/r}}\log\Bigl(1+\frac{s^{r}}{p^{r}}\Bigr)-rm\log s
  \ \le\ \frac{\pi}{\sin(\pi/r)}\frac{s}{\log s}+O\Bigl(\frac{s}{\log^{2}s}\Bigr)
  +O\bigl(k^{1/r}\bigr)-rm\log s .
\]
By \eqref{eq:saddle} the first term equals $rm$, and $s\asymp m\log m\asymp k\log k$ gives
$s/\log^{2}s\ll m/\log m$. With \eqref{eq:logs},
\[
  \log e_m\ \le\ rm-rm\log m-rm\log\log m-rm\log\frac{r\sin(\pi/r)}{\pi}+o(m).
\]
Since $\log m!=m\log m-m+O(\log m)$,
\begin{equation}\label{eq:me}
  \log\bigl(m!\,e_m\bigr)\ \le\ -(r-1)m\log m-rm\log\log m
  +\Bigl(r-1+r\log\frac{\pi}{r\sin(\pi/r)}\Bigr)m+o(m).
\end{equation}
Finally $m=m_1=q_rk(1+O(\log^{1-r}k/\log\log k))$, and since the right side of
\eqref{eq:me} has derivative $-(r-1)\log m+O(\log\log m)$ in $m$, replacing $m_1$ by $q_rk$
costs only $O(k/\log\log k)=o(k)$. Using
$\log(q_rk)=\log k-\log\zeta(r)$ and $\log\log(q_rk)=\log\log k+O(1/\log k)$ turns
\eqref{eq:me} into
\[
  \log\Delta^{(r)}_k\le-(r-1)q_rk\log k-rq_rk\log\log k
  +q_r\Bigl((r-1)\log\zeta(r)+r-1+r\log\frac{\pi}{r\sin(\pi/r)}\Bigr)k+o(k),
\]
which is the assertion.
\end{proof}

\section{The lower bound}\label{sec:lower}

We use the Chung--Erd\H{o}s inequality \cite{ChungErdos}: for events $A_1,\dots,A_M$ in a
probability space with $\sum_i\mathbb P(A_i)>0$,
\begin{equation}\label{eq:CE}
  \mathbb P\Bigl(\bigcup_iA_i\Bigr)\ \ge\
  \frac{\bigl(\sum_i\mathbb P(A_i)\bigr)^{2}}{\sum_{i,j}\mathbb P(A_i\cap A_j)} .
\end{equation}

\begin{proposition}\label{prop:lower}
$\log\Delta^{(r)}_k\ge-(r-1)q_r\,k\log k-r\,q_r\,k\log\log k+C_r\,k-o(k)$.
\end{proposition}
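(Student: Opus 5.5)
We construct many disjoint-ish events and apply \eqref{eq:CE}. First fix the small moduli: take $z$ as in the proof of Proposition~\ref{prop:upper}, and choose the residue of $n$ modulo $Q'=\prod_{p\le k^{1/r}}p^{r}$ so that the primes $p\le k^{1/r}$ cover as much of $[1,k]$ as possible. Concretely, pick $\rho$ so that every small prime $p\le z$ behaves as forced by Lemma~\ref{lem:sieve}, and for the middle primes $z<p\le k^{1/r}$ choose their classes greedily to hit positions not yet covered (each covers at least $\lfloor k/p^r\rfloor$ new positions, or we simply pick classes arbitrarily). The point is that the uncovered set $W=W(\rho)$ then has size $m_2\le q_rk(1+O(1/(\log^{r-1}k\log\log k)))$, a fixed set independent of what happens with large primes. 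Conditioning on $n\equiv\rho\pmod{Q'}$ costs a factor $1/Q'=\exp(-O(k^{1/r}))=e^{o(k)}$. It remains to cover $W$ by $r$-th powers of primes $p>k^{1/r}$, and by the same derivative argument as at the end of Proposition~\ref{prop:upper}, replacing $m_2$ by $q_rk$ in the final estimate costs $o(k)$.

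For the large primes, restrict to primes in a window $\mathcal P_0=\{p: k^{1/r}<p\le Y\}$ with $Y$ a large multiple of $s(m)$ (say $Y=s\log s$), so that Lemma~\ref{lem:euler} still gives $\sum_{p\in\mathcal P_0}\log(1+s^r/p^r)=rm+O(s/\log^2 s)+o(m)$, and the truncated $e_m$ agrees with the full one up to $e^{o(m)}$ by Lemma~\ref{lem:local}. For each injection $\varphi:W\to\mathcal P_0$ let $A_\varphi$ be the event (in the probability space of $n$ modulo $Q'\prod_{p\in\mathcal P_0}p^r$, conditioned on the class $\rho$) that $\varphi(i)^r\mid n+i$ for all $i\in W$. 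By CRT, $\mathbb P(A_\varphi)=\prod_i\varphi(i)^{-r}$, so $\sum_\varphi\mathbb P(A_\varphi)=m!\,e_m(\mathcal P_0)$, and Lemma~\ref{lem:local} plus the computation \eqref{eq:me} (now as a lower bound, losing only $\log(c/\sqrt m)=o(m)$) gives exactly the claimed main terms.

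The main obstacle is the denominator $\sum_{\varphi,\psi}\mathbb P(A_\varphi\cap A_\psi)$. Two injections are compatible only where consistent: if $\varphi(i)=\psi(j)$ with $i\ne j$ the intersection is empty (since $p^r>k$), so $A_\varphi\cap A_\psi$ has probability $\prod_i\varphi(i)^{-r}\prod_{j\in S}\psi(j)^{-r}$, where $S$ is the set of $j$ with $\psi(j)\ne\varphi(j)$, and $\psi(S)$ avoids $\varphi(W)$. Summing over $\psi$ for fixed $\varphi$, grouping by $|S|=\ell$, bounds the inner sum by $\sum_{\ell}\binom{m}{\ell}\ell!\,e_\ell\cdot\dots$; more cleanly, the inner sum is at most the sum over injections of $\prod_{j}(\mathbf 1[\psi(j)=\varphi(j)]+\psi(j)^{-r})$, i.e.\ a permanent-type sum which I would compare to $m!\,e_m\cdot\prod(1+\text{error})$. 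The key estimate to aim for is
\[
\sum_{\varphi,\psi}\mathbb P(A_\varphi\cap A_\psi)\le\Bigl(\sum_\varphi\mathbb P(A_\varphi)\Bigr)^2\,e^{o(k)},
\]
which I would try to obtain by writing the ratio as an expectation over $\varphi$ drawn from the Bernoulli/saddle measure of Lemma~\ref{lem:local}: the term with $\ell$ coincidences contributes roughly $\binom m\ell\bigl(\max_p p^{r}\cdot\text{typical weight}\bigr)$, and the saving comes from $\sum_{p>k^{1/r}}p^{-r}\ll k^{(1-r)/r}/\log k$ being tiny compared with $1/m$ per coincidence after normalisation by the saddle parameter $t=s^r$; one needs $\sum_\ell\binom m\ell(\text{coincidence cost})^\ell\le e^{o(m)}$, i.e.\ the expected number of coincidences weighted appropriately is $o(m)$. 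If that crude comparison fails, the fallback is to restrict $\mathcal P_0$ further to primes near the saddle scale $s$ (say $s/\log s<p<s\log s$), where each prime has Bernoulli weight bounded away from $1$, making coincidence costs uniformly $o(1)$. Then \eqref{eq:CE} gives $\Delta^{(r)}_k\ge Q'^{-1}m!\,e_m\,e^{-o(k)}$, which is the proposition.
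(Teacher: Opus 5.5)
Your architecture is the paper's: Chung--Erd\H{o}s over all injections of the uncovered positions into primes above $k^{1/r}$, with the first sum $m!\,e_m$ evaluated through Lemma~\ref{lem:local}. Fixing $n$ modulo $Q'$ instead of $Q$ is harmless, since $\log Q'\sim rk^{1/r}=o(k)$. But the step you single out as the main obstacle is aimed at the wrong target, and that target is false.

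Put $\Sigma_1=\sum_\varphi\mathbb P(A_\varphi)=m!\,e_m(\mathcal P_0)$. By \eqref{eq:me}, $\Sigma_1\le\exp(-(r-1+o(1))q_rk\log k)$. The diagonal terms $\psi=\varphi$ alone contribute $\Sigma_1$ to $\sum_{\varphi,\psi}\mathbb P(A_\varphi\cap A_\psi)$, and $\Sigma_1$ is far larger than $\Sigma_1^{2}e^{o(k)}$. So the inequality you propose to prove cannot hold. Had it held, \eqref{eq:CE} would give $\mathbb P(\bigcup_\varphi A_\varphi)\ge e^{-o(k)}$, which is absurd since $\bigcup_\varphi A_\varphi\subseteq D^{(r)}_k$.

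You are in a rare-event regime, where the second moment must be compared with the first moment, not with its square. Your final line $\Delta^{(r)}_k\ge Q'^{-1}m!\,e_m\,e^{-o(k)}$ is what the correct comparison yields, but it does not follow from the estimate you wrote. The supporting heuristic is also wrong: $\sigma=\sum_{p>k^{1/r}}p^{-r}\asymp k^{1/r-1}/\log k$ is not small compared with $1/m$ (indeed $m\sigma\asymp k^{1/r}/\log k\to\infty$), and it does not need to be.

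What is needed is $\sum_\psi\mathbb P(A_\varphi\cap A_\psi)\le\mathbb P(A_\varphi)(1+\sigma)^{m}$ for each fixed $\varphi$, and you already had it in hand. In your bound $\mathbb P(A_\varphi)\sum_\psi\prod_{j}\bigl(\mathbf 1[\psi(j)=\varphi(j)]+\psi(j)^{-r}\bigr)$, drop the requirement that $\psi$ be injective. The sum over all maps $W\to\mathcal P_0$ then factorises over positions, and each factor is $\sum_{p}\bigl(\mathbf 1[p=\varphi(j)]+p^{-r}\bigr)\le1+\sigma$. Equivalently, in your grouping by the set $S$ of disagreements, the sum over the values of $\psi$ on $S$ is at most $\ell!\,e_\ell\le\sigma^{\ell}$ with $\ell=|S|$, and $\sum_\ell\binom m\ell\sigma^\ell=(1+\sigma)^m$.

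Summing over $\varphi$, the denominator in \eqref{eq:CE} is at most $\Sigma_1(1+\sigma)^m$. Hence $\mathbb P(\bigcup_\varphi A_\varphi)\ge\Sigma_1e^{-m\sigma}=\Sigma_1e^{-o(k)}$, using only $m\sigma=o(k)$. No comparison with $m!\,e_m$, no reweighting by the saddle-point measure, and no fallback restriction to primes near $s$ is required.

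A smaller point: truncating at $Y=s\log s$ needs more than a citation of Lemma~\ref{lem:local}, because you would have to control the saddle point of the truncated family. It is simpler to keep $Y$ free and let $Y\to\infty$, using $e_m(\{p^{-r}:k^{1/r}<p\le Y\})\uparrow e_m$, which is what the paper does.
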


\begin{proof}
Let $z$, $Q$ be as in \S\ref{sec:upper}, so $Q\le\sqrt k$ and $\log Q\le\frac12\log k$.
Fix a residue class $\rho_0$ modulo $Q$; by Lemma~\ref{lem:sieve} the set $V=V(n)$ is the
same for all $n\equiv\rho_0\ (Q)$, and by \eqref{eq:Vsize}
\begin{equation}\label{eq:m-lower}
  m:=|V|=q_r\,k\Bigl(1+O\Bigl(\frac1{\log^{r-1}k\,\log\log k}\Bigr)\Bigr).
\end{equation}
Let $Y>k$ be a parameter, let $\mathcal P_Y=\{p:k^{1/r}<p\le Y\}$, and work in the
probability space $\prod_{p\in\mathcal P_Y}\Z/p^{r}\Z$ with the uniform measure, which by
the Chinese remainder theorem computes densities of sets defined by congruences to the
moduli $p^{r}$, $p\in\mathcal P_Y$, independently of the class $\rho_0$. For an injection
$\varphi:V\to\mathcal P_Y$ set
\[
  A_\varphi=\{n\equiv\rho_0\ (Q):\ \varphi(i)^{r}\mid n+i\text{ for all }i\in V\}.
\]
If $n\in A_\varphi$ then every $i\in[1,k]\setminus V$ has $n+i$ divisible by $p^{r}$ for
some $p\le z$ and every $i\in V$ has $n+i$ divisible by $\varphi(i)^{r}$; hence
$\bigcup_\varphi A_\varphi\subseteq D^{(r)}_k$ and
\begin{equation}\label{eq:lowerQ}
  \Delta^{(r)}_k\ \ge\ \frac1Q\,\mathbb P\Bigl(\bigcup_\varphi A_\varphi\Bigr).
\end{equation}

\emph{The two sums in \eqref{eq:CE}.}
By the Chinese remainder theorem $\mathbb P(A_\varphi)=\prod_{i\in V}\varphi(i)^{-r}$, so
\begin{equation}\label{eq:firstsum}
  \sum_\varphi\mathbb P(A_\varphi)=m!\,e_m^{(Y)},\qquad
  e^{(Y)}_m:=e_m\bigl(\{p^{-r}:p\in\mathcal P_Y\}\bigr).
\end{equation}
For the second sum, let $\varphi,\psi$ be injections and $D=\{i\in V:\varphi(i)\ne\psi(i)\}$.
If some prime is required to divide two of the $n+i$ with distinct $i$, then
$A_\varphi\cap A_\psi=\emptyset$, because $p^{r}>k>|i-i'|$; otherwise the constraints
involve the $m+|D|$ distinct primes $\varphi(V)\cup\psi(D)$ and
$\mathbb P(A_\varphi\cap A_\psi)=\mathbb P(A_\varphi)\prod_{i\in D}\psi(i)^{-r}$. Summing
over $\psi$ by first choosing $D$ and then the values of $\psi$ on $D$,
\begin{equation}\label{eq:secondsum}
  \sum_\psi\mathbb P(A_\varphi\cap A_\psi)\ \le\ \mathbb P(A_\varphi)
  \sum_{d\ge0}\binom{m}{d}\sigma^{d}=\mathbb P(A_\varphi)(1+\sigma)^{m},
  \qquad\sigma:=\sum_{p>k^{1/r}}p^{-r}.
\end{equation}
By the prime number theorem $\sigma\ll k^{(1-r)/r}/\log k$, so
$m\sigma\ll k^{1/r}/\log k=o(k)$ and $(1+\sigma)^{m}=e^{o(k)}$. Combining
\eqref{eq:CE}, \eqref{eq:firstsum} and \eqref{eq:secondsum},
\[
  \mathbb P\Bigl(\bigcup_\varphi A_\varphi\Bigr)\ \ge\
  \frac{m!\,e^{(Y)}_m}{(1+\sigma)^{m}}\ =\ m!\,e^{(Y)}_m\,e^{-o(k)} .
\]
Letting $Y\to\infty$ gives $e^{(Y)}_m\uparrow e_m$, whence by \eqref{eq:lowerQ}
\begin{equation}\label{eq:lower-reduction}
  \log\Delta^{(r)}_k\ \ge\ \log\bigl(m!\,e_m\bigr)-\log Q-o(k)
  \ =\ \log\bigl(m!\,e_m\bigr)-o(k).
\end{equation}

\emph{Evaluating $m!\,e_m$ from below.}
Let $t>0$ solve $\sum_{p>k^{1/r}}\frac{tp^{-r}}{1+tp^{-r}}=m$; such a $t$ exists because the
left side is continuous, increasing, and unbounded. By Lemma~\ref{lem:local},
$\log e_m\ge\sum_{p>k^{1/r}}\log(1+tp^{-r})-m\log t-O(\log m)$. Since the right side is the
Chernoff bound of \S\ref{sec:upper} at its own minimiser, and since that minimiser is
$t=s(m)^{r}+o(\cdot)$ by \eqref{eq:saddle}, Lemma~\ref{lem:euler} and \eqref{eq:logs} give
\[
  \log e_m\ \ge\ rm-rm\log m-rm\log\log m-rm\log\frac{r\sin(\pi/r)}{\pi}-o(m),
\]
so that $\log(m!\,e_m)$ equals the right side of \eqref{eq:me} up to $o(m)$. Feeding this
into \eqref{eq:lower-reduction} and using \eqref{eq:m-lower} exactly as at the end of
\S\ref{sec:upper} completes the proof.
\end{proof}

Propositions \ref{prop:upper} and \ref{prop:lower} prove Theorem~\ref{thm:main}; the
statement for $\alpha_r$ follows from \eqref{eq:alpha-Delta}.

\begin{proof}[Proof of Corollary~\ref{cor:huxley}]
Immediate from Theorem~\ref{thm:main} with $r=2$ and \eqref{eq:alpha-Delta}, since
$k\log k$ dominates $k\log\log k$ and $k$.
\end{proof}

\begin{proof}[Proof of Corollary~\ref{cor:threshold}]
Write $L=\log(1/\Delta^{(r)}_k)=(r-1)q_rk(\log k+\frac{r}{r-1}\log\log k+O(1))$ and
$\lambda=\log L$. Then $\lambda=\log k+\log\log k+\log((r-1)q_r)+O(\log\log k/\log k)$, so
$\log k=\lambda-\log\log k-\log((r-1)q_r)+O(\log\log k/\log k)$ and hence
\[
  L=(r-1)q_r\,k\Bigl(\lambda+\tfrac1{r-1}\log\log k+O(1)\Bigr),\qquad
  k=\frac{\zeta(r)}{r-1}\cdot\frac{L}{\lambda}
  \Bigl(1+\frac{\frac1{r-1}\log\log k+O(1)}{\lambda}\Bigr)^{-1}.
\]
Taking $x=e^{L}$ we have $\lambda=\log\log x$ and
$\log\log k=\log\lambda+o(1)=\log\log\log x+o(1)$, which is the assertion. The passage from
the equality $\Delta^{(r)}_k=1/x$ to the definition of $K_r(x)$ costs $O(1)$ in $k$, which
is absorbed in the $o(1)$ because
$\Delta^{(r)}_{k+1}/\Delta^{(r)}_k=\exp(-(r-1)\log k+O(\log\log k))$.
\end{proof}

\section{The constructive threshold, and what \eqref{eq:erdos-lower} asserts}
\label{sec:constructive}

Theorem~\ref{thm:main} identifies $\zeta(r)/(r-1)$ as the first-moment constant. It is worth
contrasting this with what the classical Chinese-remainder construction delivers, because
the two differ by a factor $r/(r-1)$ --- a factor $2$ when $r=2$.

\begin{definition*}
A \emph{covering system of length $k$} is a finite set $P$ of primes together with residues
$\rho_p\bmod p^{r}$, $p\in P$, such that for every $i\in[1,k]$ there is a $p\in P$ with
$\rho_p\equiv-i\ (\mathrm{mod}\ p^{r})$. Its \emph{modulus} is $M=\prod_{p\in P}p^{r}$.
\end{definition*}

Any $n$ with $n\equiv\rho_p\ (p^{r})$ for all $p\in P$ lies in $D^{(r)}_k$, and by the
Chinese remainder theorem the least such $n$ is smaller than $M$. This is exactly the device
behind \eqref{eq:erdos-lower}, and behind our own lower bound in \S\ref{sec:lower}.

\begin{proposition}[the constructive threshold]\label{prop:crt}
Every covering system of length $k$ has modulus
\[
  M\ \ge\ \exp\bigl((r\,q_r+o(1))\,k\log k\bigr).
\]
Consequently a covering system exhibits a run of length $k$ only below
$\exp((rq_r+o(1))k\log k)$, and the construction proves no more than: for infinitely many
$i$,
\[
  s_{i+1}-s_i>(1+o(1))\,\frac{\zeta(r)}{r}\cdot\frac{\log s_i}{\log\log s_i},
\]
which for $r=2$ is the constant $\pi^{2}/12$. The bound on $M$ is attained, by
\S\ref{sec:lower}.
\end{proposition}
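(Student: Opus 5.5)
The plan is to repeat the counting of Steps 1--3 of \S\ref{sec:upper}, but applied to a single covering system rather than averaged over $n$. The distinct-primes constraint then bounds the modulus directly rather than a density. Take $z$ and $Q\le\sqrt k$ as in \S\ref{sec:upper}.

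\emph{Step 1: small primes.} Let $(P,\{\rho_p\})$ be a covering system of length $k$. For the primes $p\le z$, choose an integer $n$ with $n\equiv\rho_p\pmod{p^{r}}$ for every $p\in P$ with $p\le z$. For $p\le z$ with $p\notin P$, choose $n$ modulo $p^{r}$ arbitrarily. Position $i$ is covered by some $p\le z$ in $P$ only if $p^{r}\mid n+i$. Hence the positions not covered by primes $p\le z$ contain the set $V(n)$. By Lemma~\ref{lem:sieve} and \eqref{eq:Vsize}, $|V(n)|=q_rk(1+o(1))$ whatever the residues are; this is the ``no freedom of alignment'' observation.

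\emph{Step 2: middle primes.} A prime $p\in(z,k^{1/r}]$ covers at most $k/p^{r}+1$ positions. As in Step~2 of \S\ref{sec:upper}, these primes together cover only $O(k/(\log^{r-1}k\log\log k))=o(k)$ positions. So at least $m=q_rk(1+o(1))$ positions must be covered by primes $p>k^{1/r}$ in $P$. Each such prime covers at most one position, because $p^{r}>k$. Therefore $P$ contains at least $m$ distinct primes exceeding $k^{1/r}$.

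\emph{Step 3: the bound on $M$.} Bound the product over these primes below by the product over the $m$ smallest primes exceeding $k^{1/r}$:
\[
\log M\ \ge\ r\bigl(\theta(P_{\pi(k^{1/r})+m})-\theta(k^{1/r})\bigr)=r\,m\log m\,(1+o(1))=(rq_r+o(1))\,k\log k,
\]
by the prime number theorem, since $\theta(k^{1/r})\ll k^{1/r}$. This gives the inequality for $M$.

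\emph{Consequences.} If $s$ is the least positive solution of the covering congruences, then $s<M$ and $s\in D^{(r)}_k$. So a gap of length greater than $k$ appears below $x=M$. Inverting $\log x=(rq_r+o(1))k\log k$ gives $k=(1+o(1))\frac{\zeta(r)}{r}\frac{\log x}{\log\log x}$. This is the strongest gap bound a covering system can certify; for $r=2$ it is $\pi^{2}/12$.

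\emph{Attainment.} Take residues for all $p\le z$, leaving the set $V$ of size $q_rk(1+o(1))$ uncovered. Assign to the elements of $V$ the $|V|$ smallest primes exceeding $k^{1/r}$, one each. The resulting modulus satisfies $\log M\le r\theta(z)+r\,\theta(P_{\pi(k^{1/r})+|V|})=(rq_r+o(1))k\log k$, since $r\theta(z)\le\frac12\log k$. This is the construction of \S\ref{sec:lower} restricted to a single injection.

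The main point needing care is Step~1: the covering system need not use every prime $\le z$, and its residues are arbitrary. The fix is the choice of $n$ above. Any prescribed residues for the primes $p\le z$ come from some integer $n$, by the Chinese remainder theorem. Omitting primes only enlarges the uncovered set. So the lower bound from Lemma~\ref{lem:sieve} applies uniformly.
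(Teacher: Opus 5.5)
Your proof is correct and follows the paper's argument. You use the uniform sieve count by the primes $p\le z$ (Lemma~\ref{lem:sieve}), discard the $o(k)$ positions hit by primes in $(z,k^{1/r}]$, and deduce that $q_rk(1+o(1))$ positions need distinct primes above $k^{1/r}$, which gives the bound on $\log M$ via $\theta$ and the prime number theorem. The differences are cosmetic. The paper simply takes $n$ to be a solution of the whole system, which makes your separate handling of primes $p\le z$ not in $P$ unnecessary. You also spell out the single-injection construction for attainment, which the paper only cites from \S\ref{sec:lower}.
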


\begin{proof}
Let $z$ be maximal with $\prod_{p\le z}p^{r}\le\sqrt k$, so $z\asymp\log k$, and let $n$ be
a solution of the system. By Lemma~\ref{lem:sieve}, at least $q_rk(1+o(1))$ of the indices
$i\le k$ satisfy $p^{r}\nmid n+i$ for all $p\le z$; by the computation in Step 2 of
\S\ref{sec:upper}, all but $O(k/(\log^{r-1}k\log\log k))$ of those additionally satisfy
$p^{r}\nmid n+i$ for all $p\le k^{1/r}$. Each such $i$ is covered by some $p\in P$ with
$p>k^{1/r}$, and distinct such $i$ are covered by distinct primes, since $p^{r}>k>|i-i'|$
prevents $p^{r}\mid n+i$ and $p^{r}\mid n+i'$ simultaneously. Hence $P$ contains at least
$m=q_rk(1+o(1))$ primes exceeding $k^{1/r}$, and
\[
  \log M\ \ge\ r\bigl(\theta(P_{\pi(k^{1/r})+m})-\theta(k^{1/r})\bigr)
  =r\,m\log m\,(1+o(1))=r\,q_r\,k\log k\,(1+o(1))
\]
by the prime number theorem, exactly as in \S\ref{sec:lower}. For the second statement put
$\log x=\log M$ and invert as in Corollary~\ref{cor:threshold}; the exponent is larger by
the factor $r/(r-1)$, so the resulting constant is smaller by that factor.
\end{proof}

\begin{remark}\label{rem:status}
Comparing Proposition~\ref{prop:crt} with Corollary~\ref{cor:threshold}, for $r=2$ the two
thresholds are
\[
  \underbrace{\frac{\pi^{2}}{12}\cdot\frac{\log x}{\log\log x}}_{\text{covering systems}}
  \qquad\text{and}\qquad
  \underbrace{\frac{\pi^{2}}{6}\cdot\frac{\log x}{\log\log x}}_{\text{first moment}} .
\]
They differ because $\Delta^{(r)}_k$ exceeds the density $1/M$ of a single covering system
by the factor $m!\,e_m\prod_{j\le m}p_j^{r}=\exp((q_r+o(1))k\log k)$ coming from the $m!$
assignments of primes to positions --- precisely the gain that \S\ref{sec:lower} was
designed to capture, and the reason the Chung--Erd\H{o}s step there cannot be replaced by
the exhibition of one class. Erd\H{o}s states \eqref{eq:erdos-lower} with the constant
$\pi^{2}/6$ without a detailed proof, remarking only that it ``was certainly known to
several mathematicians, e.g.\ Bateman, Chowla and Mirsky''. In view of
Proposition~\ref{prop:crt}, \eqref{eq:erdos-lower} is \emph{equivalent} to the assertion
that the first run of length $k$ occurs near the first-moment threshold
$\Delta_k^{-1+o(1)}$ and not merely near the covering-system modulus
$\Delta_k^{-2+o(1)}$; no covering system can establish it. We have not been able to locate
an argument of the required kind in the literature.
\end{remark}

\begin{question}\label{q:erdos20}
Is there a proof of \eqref{eq:erdos-lower} with the constant $\pi^{2}/6$? Equivalently, is
the first occurrence $F(k)$ of a run of $k$ consecutive non-squarefree integers bounded by
$\Delta_k^{-1+o(1)}$?
\end{question}

The numerical evidence for Question~\ref{q:erdos20} is strong: by Table~\ref{tab:delta} and
Figure~\ref{fig:pred} the ratio $F(k)\Delta_k$ lies in $[0.17,3.67]$ for all eighteen known
records, that is $F(k)=\Delta_k^{-1+o(1)}$ over seventeen orders of magnitude, whereas the
covering-system modulus is already $\Delta_k^{-2+o(1)}$. Whatever the status of
\eqref{eq:erdos-lower}, Corollary~\ref{cor:threshold} shows that $\pi^{2}/6$ cannot be
improved by any first-moment argument, which is the content of Erd\H{o}s's remark that it
``seems to be extremely hard to replace $\pi^{2}/6$ by any larger constant''.

\section{Exact values, checks, and numerical confirmation}
\label{sec:numerics}

\subsection{An exact evaluation}
The following is the standard Mirsky-type inclusion--exclusion; what is new is only the
scale at which we run it. Let $Z$ be the set of primes with $p^{r}\le k$ and
$M=\prod_{p\in Z}p^{r}$; for $r=2$ and $k\le168$ one may take $Z=\{2,3,5,7,11\}$,
$M=5\,336\,100$, and for $r=3$ and $k\le124$ one may take $Z=\{2,3\}$, $M=216$. Every prime
$p\notin Z$ then has $p^{r}>k$ and so covers at most one position of a window of length
$k$, with probability $p^{-r}$, independently of $n\bmod M$. Consequently, if $u(\rho)$
denotes the number of positions of $[1,k]$ not covered by $p^{r}$, $p\in Z$, when
$n\equiv\rho\ (M)$, then
\begin{equation}\label{eq:exact}
  \Delta^{(r)}_k=\frac1M\sum_{\rho\bmod M}Q\bigl(u(\rho)\bigr),\qquad
  Q(u)=\sum_{j=0}^{u}(-1)^{j}\binom{u}{j}G(j),\qquad
  G(j)=\prod_{p\notin Z}\Bigl(1-\frac j{p^{r}}\Bigr),
\end{equation}
and this is exact. The histogram of $u(\rho)$ is computed by one pass of a sliding window
over $\Z/M\Z$; the products $G(j)$ are evaluated from prime zeta values, and the
alternating sum defining $Q(u)$ is carried out in $900$-digit arithmetic (the cancellation
is severe: for $r=2$, $k=150$ the terms have size $\approx10^{40}$ and the result is
$\approx10^{-239}$). The same method with $G(j)$ replaced by $G(j+2)$ evaluates
$\alpha_r(h)$, since requiring the two endpoints to be uncovered as well simply adds two
positions to the inclusion--exclusion.

\subsection{Five checks}
(i) For $r=2$, $\Delta^{(2)}_1$ agrees with $1-6/\pi^{2}=0.39207289814597337134\ldots$ to
$20$ digits; for $r=3$, $\Delta^{(3)}_1$ agrees with $1-1/\zeta(3)=0.168092627419\ldots$
to $14$ digits. (ii) $\sum_h\alpha(h)=6/\pi^{2}$ and $\sum_h h\,\alpha(h)=1$ hold to $20$
digits, the latter being the statement that the gaps tile $\N$. (iii)
$\alpha(1)/\sum_h\alpha(h)=0.53071182\ldots$ and $\alpha(2)/\sum_h\alpha(h)=0.324294\ldots$
agree with the constants recorded in \cite[\texttt{A076259}]{OEIS}. (iv) A segmented sieve
over $[1,10^{12}]$, using a wheel modulo $2^{2}3^{2}5^{2}7^{2}$ and marking only the squares
of primes $\ge11$, gives the counts of Table~\ref{tab:sieve}; the longest run below
$10^{12}$ is $13$, consistent with $F(14)=1.0435\cdot10^{12}$. (v) The identity
$\alpha(k+1)=\Delta_k-2\Delta_{k+1}+\Delta_{k+2}$ of \eqref{eq:D-alpha} holds to the
printed precision for all $k\le148$.

\begin{table}[ht]
\caption{Sieve check for $r=2$. $N_k=\#(D^{(2)}_k\cap[1,10^{12}])$ against
$\Delta^{(2)}_k\cdot10^{12}$.}
\label{tab:sieve}
\begin{tabular}{rrrl}
\toprule
$k$ & $N_k$ (sieved) & $\Delta^{(2)}_k\cdot10^{12}$ & ratio\\
\midrule
1 & $392\,072\,897\,726$ & $392\,072\,898\,146$ & $1.0000000$\\
3 & $18\,634\,011\,558$ & $18\,634\,010\,350$ & $1.0000001$\\
5 & $641\,333\,605$ & $641\,332\,384$ & $1.0000019$\\
7 & $5\,206\,950$ & $5\,206\,270$ & $1.000131$\\
9 & $73\,177$ & $73\,077$ & $1.00137$\\
11 & $725$ & $754$ & $0.961$\\
13 & $8$ & $15.5$ & (Poisson)\\
\bottomrule
\end{tabular}
\end{table}

\subsection{The constant $C_r$}\label{sub:Cr}
Theorem~\ref{thm:main} asserts that
\[
  R_r(k):=\frac1k\Bigl(\log\frac1{\Delta^{(r)}_k}-(r-1)q_r\,k\log k
  -r\,q_r\,k\log\log k\Bigr)\longrightarrow -C_r .
\]
Figure~\ref{fig:Cr} plots $R_r(k)+C_r$ for $r=2$, $10\le k\le150$ and for $r=3$,
$10\le k\le120$. Both curves descend towards $0$, from $0.96$ and $0.94$ respectively to
$0.11$ and $0.09$; note that no parameter has been fitted --- $C_2=1.45955\ldots$ and
$C_3=2.44410\ldots$ come from the closed form \eqref{eq:Cr}. The residual is consistent
with the $O(m\log\log m/\log m)$ error carried by \eqref{eq:logs}, which at $k=150$
amounts to about $0.39$ per unit of $k$.

\begin{figure}[ht]
\centering
\begin{tikzpicture}[x=0.0755cm,y=4.6cm]
  \draw[gray!45] (10,0)--(152,0);
  \foreach \y in {0.2,0.4,0.6,0.8,1.0}{\draw[gray!25](10,\y)--(152,\y);
    \node[left,font=\scriptsize,gray!70] at (10,\y){$\y$};}
  \node[left,font=\scriptsize,gray!70] at (10,0){$0$};
  \draw (10,0)--(10,1.05);
  \foreach \x in {25,50,75,100,125,150}{\draw(\x,0)--(\x,-0.022);
    \node[below,font=\scriptsize] at (\x,-0.022){$\x$};}
  \node[below,font=\scriptsize] at (81,-0.085){$k$};
  \draw[thick] plot coordinates{(10,0.9589) (15,0.7051) (20,0.5921) (25,0.4662) (30,0.4168) (35,0.4393) (40,0.3860) (45,0.3441) (50,0.3085) (55,0.2736) (60,0.2279) (65,0.1906) (70,0.2213) (75,0.2063) (80,0.2404) (85,0.1700) (90,0.2090) (95,0.2010) (100,0.2358) (105,0.1573) (110,0.1378) (115,0.1600) (120,0.1504) (125,0.1219) (130,0.1095) (135,0.1080) (140,0.1299) (145,0.1206) (150,0.1131)};
  \foreach \p in {(10,0.9589),(15,0.7051),(20,0.5921),(25,0.4662),(30,0.4168),(35,0.4393),(40,0.3860),(45,0.3441),(50,0.3085),(55,0.2736),(60,0.2279),(65,0.1906),(70,0.2213),(75,0.2063),(80,0.2404),(85,0.1700),(90,0.2090),(95,0.2010),(100,0.2358),(105,0.1573),(110,0.1378),(115,0.1600),(120,0.1504),(125,0.1219),(130,0.1095),(135,0.1080),(140,0.1299),(145,0.1206),(150,0.1131)}{\fill \p circle(1.2pt);}
  \draw[thick,dashed] plot coordinates{(10,0.9370) (15,0.9368) (20,0.6303) (25,0.4663) (30,0.3283) (35,0.1976) (40,0.3449) (45,0.2565) (50,0.1938) (55,0.1679) (60,0.0888) (65,0.0533) (70,0.1548) (75,0.1152) (80,0.2196) (85,0.0569) (90,0.0232) (95,0.1130) (100,0.0834) (105,0.0640) (110,0.0460) (115,0.0138) (120,0.0925)};
  \foreach \p in {(10,0.9370),(15,0.9368),(20,0.6303),(25,0.4663),(30,0.3283),(35,0.1976),(40,0.3449),(45,0.2565),(50,0.1938),(55,0.1679),(60,0.0888),(65,0.0533),(70,0.1548),(75,0.1152),(80,0.2196),(85,0.0569),(90,0.0232),(95,0.1130),(100,0.0834),(105,0.0640),(110,0.0460),(115,0.0138),(120,0.0925)}{\draw[fill=white,line width=0.6pt] \p circle(1.5pt);}
  \draw[thick] (95,0.93)--(105,0.93); \fill (100,0.93) circle(1.2pt);
  \node[right,font=\scriptsize] at (106,0.93){$r=2$ (squarefree), $C_2=1.45955$};
  \draw[thick,dashed] (95,0.83)--(105,0.83);
  \draw[fill=white,line width=0.6pt] (100,0.83) circle(1.5pt);
  \node[right,font=\scriptsize] at (106,0.83){$r=3$ (cubefree), $C_3=2.44410$};
\end{tikzpicture}
\caption{$R_r(k)+C_r$ for $r=2$ and $r=3$, with $C_r$ given by the closed form
\eqref{eq:Cr}. Both curves tend to $0$, confirming the linear coefficient of
Theorem~\ref{thm:main}. The oscillation is arithmetic, not noise; compare
Figure~\ref{fig:pred}.}
\label{fig:Cr}
\end{figure}

\subsection{Confirmation of the coefficient $r\,q_r$}\label{sub:coeff}
The $\log\log$ term cannot be seen by fitting $\Delta_k$ directly for $k\le150$; see
Remark~\ref{rem:slow}. It can be confirmed by isolating the factor $Q(u)$ of
\eqref{eq:exact}, which is the pure ``coupon collector'' quantity and carries the whole of
the second-order term. For $r=2$ the saddle point analysis of \S\ref{sec:upper} predicts,
with $s=s(u)$ defined by \eqref{eq:saddle},
\begin{equation}\label{eq:Qderiv}
  \frac{d}{du}\log\frac1{Q(u)}=\log u+2\log\log s(u)+2\log\tfrac2\pi+o(1),
  \qquad 2\log\tfrac2\pi=-0.9032\ldots
\end{equation}
Figure~\ref{fig:coeff} compares \eqref{eq:Qderiv} with the exact symmetric difference
quotient of $\log(1/Q(u))$ computed from the $900$-digit values of $Q$. The gap decreases
monotonically from $0.374$ at $u=30$ to $0.022$ at $u=145$, i.e.\ by a factor $17$, which
leaves no room for a coefficient other than $2$.

\begin{figure}[ht]
\centering
\begin{tikzpicture}[x=0.0785cm,y=3.05cm]
  \foreach \y in {1.4,1.8,2.2,2.6}{\draw[gray!25](10,\y)--(148,\y);
    \node[left,font=\scriptsize,gray!70] at (10,\y){$\y$};}
  \draw (10,1.15)--(148,1.15); \draw (10,1.15)--(10,2.95);
  \foreach \x in {25,50,75,100,125}{\draw(\x,1.15)--(\x,1.12);
    \node[below,font=\scriptsize] at (\x,1.12){$\x$};}
  \node[below,font=\scriptsize] at (79,1.005){$u$};
  \draw[thick,dashed] plot coordinates{(10,1.2427) (15,1.6097) (20,1.8223) (25,1.9685) (30,2.0783) (35,2.1654) (40,2.2371) (45,2.2978) (50,2.3502) (55,2.3962) (60,2.4371) (65,2.4738) (70,2.5070) (75,2.5374) (80,2.5653) (85,2.5911) (90,2.6150) (95,2.6374) (100,2.6583) (105,2.6780) (110,2.6965) (115,2.7140) (120,2.7306) (125,2.7464) (130,2.7614) (135,2.7757) (140,2.7894) (145,2.8025)};
  \draw[thick] plot coordinates{(10,2.4844) (15,2.4260) (20,2.4196) (25,2.4322) (30,2.4522) (35,2.4751) (40,2.4985) (45,2.5217) (50,2.5441) (55,2.5655) (60,2.5859) (65,2.6053) (70,2.6238) (75,2.6414) (80,2.6581) (85,2.6741) (90,2.6893) (95,2.7038) (100,2.7178) (105,2.7311) (110,2.7439) (115,2.7562) (120,2.7681) (125,2.7795) (130,2.7905) (135,2.8011) (140,2.8114) (145,2.8213)};
  \foreach \p in {(10,2.4844),(15,2.4260),(20,2.4196),(25,2.4322),(30,2.4522),(35,2.4751),(40,2.4985),(45,2.5217),(50,2.5441),(55,2.5655),(60,2.5859),(65,2.6053),(70,2.6238),(75,2.6414),(80,2.6581),(85,2.6741),(90,2.6893),(95,2.7038),(100,2.7178),(105,2.7311),(110,2.7439),(115,2.7562),(120,2.7681),(125,2.7795),(130,2.7905),(135,2.8011),(140,2.8114),(145,2.8213)}{\fill \p circle(1.3pt);}
  \draw[thick] (56,1.45)--(66,1.45); \fill (61,1.45) circle(1.3pt);
  \node[right,font=\scriptsize] at (67,1.45){exact $\frac{d}{du}\log(1/Q(u))-\log u$};
  \draw[thick,dashed] (56,1.32)--(66,1.32);
  \node[right,font=\scriptsize] at (67,1.32){saddle point $2\log\log s(u)-0.9032$};
\end{tikzpicture}
\caption{Confirmation of the coefficient $r\,q_r$ for $r=2$. The two curves approach one
another; their difference falls from $0.374$ at $u=30$ to $0.022$ at $u=145$.}
\label{fig:coeff}
\end{figure}

Solving the saddle equation with the actual primes (exactly for $p\le2\cdot10^{7}$, by the
prime number theorem integral beyond) allows \eqref{eq:Qderiv} to be extrapolated far
beyond the range in which $Q(u)$ itself is computable. Table~\ref{tab:extrap} shows the
resulting ratio creeping towards $2$ at the predicted rate $2-0.589/\log\log u$; at
$u=10^{30}$ it is still only $1.81$, which is exactly why a direct fit for $k\le150$ is
hopeless.

\begin{table}[ht]
\caption{$\bigl(\frac{d}{du}\log(1/Q(u))-\log u\bigr)/\log\log u$ for $r=2$, from the
saddle point with the actual primes. The limit is $2$.}
\label{tab:extrap}
\begin{tabular}{lrrrrrrr}
\toprule
$u$ & $10^{2}$ & $10^{4}$ & $10^{6}$ & $10^{9}$ & $10^{12}$ & $10^{18}$ & $10^{30}$\\
\midrule
ratio & $1.776$ & $1.749$ & $1.767$ & $1.780$ & $1.788$ & $1.799$ & $1.812$\\
$2-0.589/\log\log u$ & $1.614$ & $1.735$ & $1.776$ & $1.806$ & $1.823$ & $1.842$ & $1.861$\\
\bottomrule
\end{tabular}
\end{table}

\subsection{Values for $r=2$, and the quality of $1/\Delta_k$ as a predictor}
Table~\ref{tab:delta} lists $\Delta^{(2)}_k$ and $\alpha(k+1)$ for $k\le20$ together with
the first occurrence $F(k)$ of a run of length $k$ (\texttt{A045882}) and the ratio
$F(k)\Delta_k$. Over the whole known range the ratio has geometric mean $0.908$ and lies in
$[0.17,3.67]$: across seventeen orders of magnitude $1/\Delta_k$ predicts $F(k)$ to within
a small constant factor. Figure~\ref{fig:pred} displays this together with the predictions
of \S\ref{sub:pred}.

\begin{table}[ht]
\caption{Exact densities for $r=2$, and the first occurrence of a run of length $k$.}
\label{tab:delta}
\begin{tabular}{rllrr}
\toprule
$k$ & $\Delta^{(2)}_k$ & $\alpha(k+1)$ & $F(k)$ & $F(k)\Delta_k$\\
\midrule
1 & $0.392073$ & $0.197147$ & $4$ & $1.568$ \\
2 & $0.10678$ & $0.0716601$ & $8$ & $0.854$ \\
3 & $0.018634$ & $0.0149788$ & $48$ & $0.894$ \\
4 & $0.00214826$ & $0.000938535$ & $242$ & $0.520$ \\
5 & $0.000641332$ & $0.000500664$ & $844$ & $0.541$ \\
6 & $7.29375\cdot10^{-5}$ & $6.27919\cdot10^{-5}$ & $22020$ & $1.606$ \\
7 & $5.20627\cdot10^{-6}$ & $4.74547\cdot10^{-6}$ & $217070$ & $1.130$ \\
8 & $2.66941\cdot10^{-7}$ & $1.25699\cdot10^{-7}$ & $1092747$ & $0.292$ \\
9 & $7.30768\cdot10^{-8}$ & $6.40077\cdot10^{-8}$ & $8870024$ & $0.648$ \\
10 & $4.91166\cdot10^{-9}$ & $3.44829\cdot10^{-9}$ & $221167422$ & $1.086$ \\
11 & $7.54184\cdot10^{-10}$ & $6.79634\cdot10^{-10}$ & $221167422$ & $0.167$ \\
12 & $4.50046\cdot10^{-11}$ & $1.46822\cdot10^{-11}$ & $47255689915$ & $2.127$ \\
13 & $1.54595\cdot10^{-11}$ & $1.42864\cdot10^{-11}$ & $82462576220$ & $1.275$ \\
14 & $5.96503\cdot10^{-13}$ & $5.57143\cdot10^{-13}$ & $1043460553364$ & $0.622$ \\
15 & $1.99409\cdot10^{-14}$ & $1.90509\cdot10^{-14}$ & $79180770078548$ & $1.579$ \\
16 & $5.22193\cdot10^{-16}$ & $2.17484\cdot10^{-16}$ & $3215226335143218$ & $1.679$ \\
17 & $1.54404\cdot10^{-16}$ & $1.46306\cdot10^{-16}$ & $23742453640900972$ & $3.666$ \\
18 & $4.09765\cdot10^{-18}$ & $3.91514\cdot10^{-18}$ & $125781000834058568$ & $0.515$ \\
19 & $9.79825\cdot10^{-20}$ & $7.75381\cdot10^{-20}$ & --- & --- \\
20 & $1.34579\cdot10^{-20}$ & $6.56585\cdot10^{-22}$ & --- & --- \\
\bottomrule
\end{tabular}
\end{table}

\begin{figure}[ht]
\centering
\begin{tikzpicture}[x=0.335cm,y=0.152cm]
  \draw[gray!35] (1,0)--(32,0);
  \foreach \y in {5,10,15,20,25,30,35}{\draw[gray!25](1,\y)--(32,\y);
    \node[left,font=\scriptsize,gray!70] at (1,\y){$10^{\y}$};}
  \draw (1,0)--(32,0); \draw (1,0)--(1,37);
  \foreach \x in {5,10,15,20,25,30}{\draw(\x,0)--(\x,-0.7);
    \node[below,font=\scriptsize] at (\x,-0.7){$\x$};}
  \node[below,font=\scriptsize] at (16.5,-3.4){$k$};
  \draw[dashed,gray] (18.5,0)--(18.5,36);
  \node[right,font=\scriptsize,gray!80,rotate=90] at (19.1,3){exhaustive search};
  \draw[thick] plot coordinates{(1,0.407) (2,0.972) (3,1.730) (4,2.668) (5,3.193) (6,4.137) (7,5.283) (8,6.574) (9,7.136) (10,8.309) (11,9.123) (12,10.347) (13,10.811) (14,12.224) (15,13.700) (16,15.282) (17,15.811) (18,17.387) (19,19.009) (20,19.871) (21,20.189) (22,21.849) (23,23.672) (24,25.420) (25,25.893) (26,27.657) (27,29.035) (28,30.562) (29,30.973) (30,32.745) (31,34.542) (32,36.413)};
  \foreach \p in {(1,0.407),(2,0.972),(3,1.730),(4,2.668),(5,3.193),(6,4.137),(7,5.283),(8,6.574),(9,7.136),(10,8.309),(11,9.123),(12,10.347),(13,10.811),(14,12.224),(15,13.700),(16,15.282),(17,15.811),(18,17.387),(19,19.009),(20,19.871),(21,20.189),(22,21.849),(23,23.672),(24,25.420),(25,25.893),(26,27.657),(27,29.035),(28,30.562),(29,30.973),(30,32.745),(31,34.542),(32,36.413)}{\fill \p circle(1.4pt);}
  \foreach \p in {(1,0.602),(2,0.903),(3,1.681),(4,2.384),(5,2.926),(6,4.343),(7,5.337),(8,6.039),(9,6.948),(10,8.345),(11,8.345),(12,10.674),(13,10.916),(14,12.018),(15,13.899),(16,15.507),(17,16.376),(18,17.100)}{\draw[fill=white,line width=0.7pt] \p +(-2.3pt,-2.3pt) rectangle +(2.3pt,2.3pt);}
  \foreach \p in {(19,19.496),(20,23.171),(21,20.507),(22,23.330),(23,26.837),(24,31.456)}{\draw[line width=0.7pt] \p +(0pt,2.8pt) -- +(2.6pt,-2.1pt) -- +(-2.6pt,-2.1pt) -- cycle;}
  \draw[thick] (2.2,36)--(4.2,36); \fill (3.2,36) circle(1.4pt);
  \node[right,font=\scriptsize] at (4.4,36){$1/\Delta_k$ (model)};
  \draw[fill=white,line width=0.7pt] (3.2,33.4) +(-2.3pt,-2.3pt) rectangle +(2.3pt,2.3pt);
  \node[right,font=\scriptsize] at (4.4,33.4){$F(k)$ (known)};
  \draw[line width=0.7pt] (3.2,30.8) +(0pt,2.8pt)--+(2.6pt,-2.1pt)--+(-2.6pt,-2.1pt)--cycle;
  \node[right,font=\scriptsize] at (4.4,30.8){Wong upper bound};
\end{tikzpicture}
\caption{The model $1/\Delta^{(2)}_k$ against the known first occurrences $F(k)$ and
against E.~B.~Wong's Chinese-remainder upper bounds for $F(k)$, $19\le k\le24$, recorded in
\cite{Marmet}. The vertical axis is $\log_{10}$. The model passes through all eighteen known
records and stays below all six upper bounds. The visible zig-zag is arithmetic, not noise:
it reflects the residue structure modulo $4$ and $9$ (for instance
$\Delta_{20}/\Delta_{21}=2.1$ while $\Delta_{21}/\Delta_{22}=46$).}
\label{fig:pred}
\end{figure}

\subsection{The constants $B(\gamma)$}
The constants in \eqref{eq:moments} are now numerically accessible. Since $\alpha(h)$
decays like $\exp(-q_2h\log h)$ by Theorem~\ref{thm:main}, the series
$B(\gamma)=\sum_hh^{\gamma}\alpha(h)$ converges for every $\gamma\ge0$ and truncating at
$h=60$ is more than sufficient. We find
\[
  B(0)=\tfrac{6}{\pi^{2}},\quad B(1)=1,\quad
  B(2)=2.04070977646714\ldots,\quad B(3)=5.04286811382262\ldots,
\]
\[
  B(3.6875)=10.2852261048655\ldots,\qquad
  B(3.75)=11.0089932906174\ldots,
\]
\[
  B(4)=14.5232122094753\ldots
\]
($B(0)$ and $B(1)$ are forced and serve as checks; $B(2)$, $B(3)$ and $B(3.75)$ are the
constants in the theorems of Erd\H{o}s, Hooley and Chan respectively).

\subsection{Predictions}\label{sub:pred}
Marmet \cite[Appendix A]{Marmet} already gave estimates of this kind, based on an
approximation to the probability of assembling the minimum number of primes needed to
produce a gap of a given length, and displayed them against the known values. We replace
his approximate probability by the exact density $\Delta_k$ and add a calibration factor.
Assuming that runs of a given length occur at the scale predicted by their density --- a
hypothesis validated by Table~\ref{tab:delta} and Figure~\ref{fig:pred} over seventeen
orders of magnitude --- the first occurrence $F(k)$ should be $\approx0.908/\Delta_k$.
Table~\ref{tab:pred} records this for $19\le k\le32$, with an uncertainty of roughly a
factor $4$ either way. Exhaustive search reached $1.2587\cdot10^{17}$ in \cite{Marmet} and
$10^{18}$ in \cite{MOT}. All the predictions lie below the Chinese-remainder upper bounds
of E.~B.~Wong recorded in \cite{Marmet}, and for $k=21$ the prediction is only a factor
$2.3$ below Wong's bound.

\begin{table}[ht]
\caption{Predicted first occurrence of a run of $k$ consecutive non-squarefree integers,
and Wong's upper bounds where available.}
\label{tab:pred}
\begin{tabular}{rlll}
\toprule
$k$ & $1/\Delta_k$ & prediction $0.908/\Delta_k$ & Wong upper bound\\
\midrule
19 & $1.021\cdot10^{19}$ & $9.27\cdot10^{18}$ & $3.13\cdot10^{19}$\\
20 & $7.431\cdot10^{19}$ & $6.75\cdot10^{19}$ & $1.48\cdot10^{23}$\\
21 & $1.545\cdot10^{20}$ & $1.40\cdot10^{20}$ & $3.21\cdot10^{20}$\\
22 & $7.063\cdot10^{21}$ & $6.41\cdot10^{21}$ & $2.14\cdot10^{23}$\\
23 & $4.695\cdot10^{23}$ & $4.26\cdot10^{23}$ & $6.87\cdot10^{26}$\\
24 & $2.627\cdot10^{25}$ & $2.39\cdot10^{25}$ & $2.85\cdot10^{31}$\\
25 & $7.814\cdot10^{25}$ & $7.09\cdot10^{25}$ & ---\\
26 & $4.540\cdot10^{27}$ & $4.12\cdot10^{27}$ & ---\\
27 & $1.085\cdot10^{29}$ & $9.85\cdot10^{28}$ & ---\\
28 & $3.647\cdot10^{30}$ & $3.31\cdot10^{30}$ & ---\\
29 & $9.399\cdot10^{30}$ & $8.53\cdot10^{30}$ & ---\\
30 & $5.563\cdot10^{32}$ & $5.05\cdot10^{32}$ & ---\\
31 & $3.480\cdot10^{34}$ & $3.16\cdot10^{34}$ & ---\\
32 & $2.589\cdot10^{36}$ & $2.35\cdot10^{36}$ & ---\\
\bottomrule
\end{tabular}
\end{table}

\begin{remark}[why the $\log\log$ term is numerically invisible]\label{rem:slow}
A naive fit of $\log(1/\Delta^{(2)}_k)=q_2k(\log k+A\log\log k+C)$ to the exact values for
$k\le150$ returns $A\approx0.8$, not $2$. There are two reasons. First, $\log\log k$ ranges
only over $[1.2,1.6]$ for $30\le k\le150$, so the fit has no leverage and any $A$ can be
compensated by $C$. Second, in that range the sum \eqref{eq:exact} is dominated not by the
typical value of $u(\rho)$ but by its \emph{minimum}: for $k=150$ the mean of $u(\rho)$ is
$93.26$ while the single term $u=89=\min_\rho u(\rho)$ carries $98.7\%$ of $\Delta_{150}$.
This is a finite-$k$ alignment effect, possible only because the window length is far below
the period $M$; it disappears as $k$ grows and is precisely what Lemma~\ref{lem:sieve}
rules out in the asymptotic regime. \S\ref{sub:coeff} confirms the coefficient nonetheless,
and \S\ref{sub:Cr} confirms $C_r$ directly.
\end{remark}

\appendix

\section{A reformulation of the exponent in the moment problem}\label{app:moment}

This appendix is not used elsewhere in the paper, and it is not a rigorous derivation: it
is a bookkeeping exercise on the published arguments of Huxley \cite{Huxley1997} and Chan
\cite{Chan}, recorded because it locates the obstruction referred to in
Remark~\ref{rem:nogain} in a single inequality. We have not verified it against
\cite{Huxley1997}, to which we did not have access; the two numerical coincidences below
are our only evidence that the bookkeeping is faithful.

In that treatment one splits the gap length $H$ and the prime scale $P$ dyadically and must
show, for each pair $(H,P)$, either that $T(H,P)<H/(64\gamma\log H)$, where $T(H,P)$ is the
largest number of primes $p\in[P,2P)$ whose square has a multiple in some window of length
$H$, or one of three bounds for a sextuple count $S(H,P)$. Applying the $r$-th derivative
test of Huxley--Sargos \cite{HS2006} to $f_n(u)=n/u^{2}$ with $M=P$, $\delta=H/P^{2}$,
$\lambda_r\asymp x/P^{r+2}$, its first two terms $M\lambda_r^{2/(r(r+1))}$ and
$M\delta^{2/((r-1)(r-2))}$ give the two thresholds $P\le H^{a}x^{-b}$ and $P\le H^{c}$ with
\[
  a=\frac{r(r+1)}{r^{2}-r-4},\qquad b=\frac{2}{r^{2}-r-4},\qquad
  c=\frac{1-e}{1-2e},\quad e=\frac{2}{(r-1)(r-2)} .
\]
Matching these against the requirement $P\gg H^{(2\gamma-4)/3}$ of Case 2 and against the
overlap condition $H\le x^{3/(8\gamma-13)}$ yields the two caps
\begin{equation}\label{eq:caps}
  \gamma_1(r)=\frac{3a+13b+4}{8b+2}=\frac{7r^{2}-r+10}{2(r^{2}-r+4)},
  \qquad \gamma_2(r)=2+\tfrac32 c .
\end{equation}
Table~\ref{tab:gamma} evaluates these. That $\gamma_1(4)=59/16$ and $\gamma_1(5)=15/4$
agree, with no free parameters, with the exponents of \cite{Huxley2000} and \cite{Chan} is
the evidence referred to above. The table also indicates why $r=5$ is the best choice:
$\gamma_1$ is largest at $r=6$, but there $\gamma_2$ has already fallen back to $59/16$.

\begin{table}[ht]
\caption{The two caps \eqref{eq:caps} on the moment exponent as a function of the order $r$
of the derivative test.}
\label{tab:gamma}
\begin{tabular}{rlllll}
\toprule
$r$ & $\gamma_1(r)$ & $\gamma_2(r)$ & $\min$ & \\
\midrule
$4$ & $59/16=3.6875$ & $5$ & $3.6875$ & cf.\ \cite{Huxley2000}\\
$5$ & $15/4=3.7500$ & $31/8=3.8750$ & $\mathbf{3.7500}$ & cf.\ \cite{Chan}\\
$6$ & $64/17=3.7647$ & $59/16=3.6875$ & $3.6875$ & capped by $\gamma_2$\\
$7$ & $173/46=3.7609$ & $47/13=3.6154$ & $3.6154$ &\\
$8$ & $15/4=3.7500$ & $68/19=3.5789$ & $3.5789$ &\\
$10$ & $175/47=3.7234$ & $241/68=3.5441$ & $3.5441$ &\\
\bottomrule
\end{tabular}
\end{table}

A general threshold $P\le H^{a}x^{-b}$ would reach the ceiling $\gamma<19/5$ imposed by
Case 1 (whose bound $H^{6/5}x^{3/5}$ is critical at $H=x^{1/5}$) precisely when
$a\ge5.8\,b+1.2$; in the derivative-test parametrisation $a=1/(1-(r+2)\kappa)$, $b=\kappa a$
this reads $\kappa\ge0.2/(1.2r-3.4)$, i.e.\ $\kappa\ge1/13$ at $r=5$ against the available
$\kappa=2/(r(r+1))=1/15$. Improving $\gamma$ beyond $3.75$ at all would require only
$\kappa\ge0.0688$, a gain of $3.2\%$ in a single exponent, at the single critical
configuration $H=x^{0.17564}$, $P=x^{0.20609}$. We have not been able to supply it.

\section*{Acknowledgements}
The computations were carried out in Python (\texttt{mpmath}) and C; the code and the
tables of $\Delta^{(r)}_k$ and $\alpha_r(h)$ are available from the author.
During this research the author used Claude, an AI assistant developed by Anthropic, to
help with exploring arguments, writing and checking code, and editing the exposition; all
mathematical content, proofs and numerical results were verified by the author, who takes
full responsibility for the paper.

\end{document}